\documentclass[11pt,a4paper,oneside]{amsart}
\pdfoutput=1
\usepackage{amssymb}
\usepackage{esint}
\usepackage{mathtools}
\usepackage{hyperref}
\usepackage{enumitem}

\usepackage[english]{babel}
\usepackage[utf8]{inputenc} 
\usepackage[pdftex]{graphicx}

\theoremstyle{plain}
\newtheorem{theorem}{Theorem}[section]
\newtheorem{lemma}[theorem]{Lemma}
\newtheorem{corollary}[theorem]{Corollary}
\newtheorem{proposition}[theorem]{Proposition}

\theoremstyle{definition}

\theoremstyle{remark}
\newtheorem{remark}{Remark}[section]

\newcommand{\T}{\mathbb{T}}
\newcommand{\R}{\mathbb{R}}
\newcommand{\Z}{\mathbb{Z}}
\newcommand{\C}{\mathbb{C}}
\newcommand{\N}{\mathbb{N}}
\newcommand{\Q}{\mathbb{Q}}
\renewcommand{\P}{\mathbb{P}}
\newcommand{\Gr}{\mathbf{Gr}}

\newcommand{\vev}[1]{\left\langle#1\right\rangle}

\renewcommand{\phi}{\varphi}
\newcommand{\abs}[1]{\left| #1 \right|}
\newcommand{\norm}[1]{\Vert #1 \Vert}

\title[Fourier analysis of periodic Radon transforms]{Fourier analysis of periodic Radon transforms}

\author{Jesse Railo}
\address{Department of Mathematics and Statistics, University of Jyv\"askyl\"a, P.O.Box 35 (MaD) FI-40014 University of Jyv\"askyl\"a, Finland}
\email{jesse.t.railo@jyu.fi}
\date{\today}
\thanks{This is the accepted manuscript of the work that was first published in \emph{J. Fourier Anal. Appl.} 26 (2020), 64. \href{https://doi.org/10.1007/s00041-020-09775-1}{10.1007/s00041-020-09775-1}.}
\thanks{New contact details: Seminar for Applied Mathematics, Department of Mathematics, ETH Zurich, Rämistrasse 101, CH-8092 Zürich, Switzerland; \href{jesse.railo@math.ethz.ch}{jesse.railo@math.ethz.ch}.}

\begin{document}

\begin{abstract}
We study reconstruction of an unknown function from its $d$-plane Radon transform on the flat torus $\T^n = \R^n /\Z^n$ when $1 \leq d \leq n-1$. We prove new reconstruction formulas and stability results with respect to weighted Bessel potential norms. We solve the associated Tikhonov minimization problem on $H^s$ Sobolev spaces using the properties of the adjoint and normal operators. One of the inversion formulas implies that a compactly supported distribution on the plane with zero average is a weighted sum of its X-ray data.
\end{abstract}
\keywords{Radon transform, Fourier analysis, periodic distributions, regularization}
\subjclass[2010]{44A12, 42B05, 46F12, 45Q05}
\maketitle

\section{Introduction}

We study reconstruction of an unknown function from its $d$-plane Radon transform on the flat torus $\T^n = \R^n /\Z^n$ when $1\leq d \leq n-1$. The $d$-plane Radon transform of a function $f$ on $\T^n$ encodes the integrals of $f$ over all periodic $d$-planes. The usual $d$-plane Radon transform of compactly supported objects on $\R^n$ can be reduced into the periodic $d$-plane Radon transform, but not vice versa. This was demonstrated for the geodesic X-ray transform in the recent work of Ilmavirta, Koskela and Railo \cite{IKR19}. As general references on the Radon transforms, we point to \cite{H99,Q06,Hel13,KQ15}.

Reconstruction formulas for integrable functions and a family of regularization strategies considered in this article were derived in \cite{IKR19} for the geodesic X-ray transform ($d=1$) on $\T^2$. We extend these methods to the $d$-plane Radon transforms of higher dimensions, study new types of reconstruction formulas for distributions, and prove new stability estimates on the Bessel potential spaces. This article considers only the mathematical theory of Radon transforms on $\T^n$, whereas numerical algorithms (Torus CT) were implemented in \cite{IKR19,RK19}.

Injectivity, a reconstruction method and certain stability estimates of the $d$-plane Radon transform on $\T^n$ were proved for distributions by Ilmavirta in \cite{I15}. Our reconstruction formulas and stability estimates in this article are different than the ones in \cite{I15}. The first injectivity result for the geodesic X-ray transform on $\T^2$ was obtained by Strichartz in \cite{S82}, and generalized to $\T^n$ by Abouelaz and Rouvière in \cite{AR11} if the Fourier transform is $\ell^1(\Z^n)$. Abouelaz proved uniqueness under the same assumption for the $d$-plane Radon transform in \cite{A11}.

The X-ray transform and tensor tomography on $\T^n$ has been applied to other integral geometry problems. These examples include the broken ray transform on boxes \cite{I15}, the geodesic ray transform on Lie groups \cite{I16}, tensor tomography on periodic slabs \cite{IU18}, and the ray transforms on Minkowski tori \cite{I18}. We expect that the $d$-plane Radon transform on $\T^n$ has applications in similar and generalized geometric problems as well, but have not studied this possibility any further.

This article is organized as follows. The main results are stated in section \ref{sec:results}. We recall preliminaries and prove some basic properties in section \ref{sec:pre}. We prove new inversion formulas in section \ref{sec:L1}. We prove our stability estimates and theorems on Tikhonov regularization in section \ref{sec:stabtikh}.

\subsection{Results} \label{sec:results}

We describe our results next. Here we only briefly introduce the notation used, and more details are given in the subsequent sections. One can also find more details in \cite{I15,IKR19}. Let $n,d \in \Z$ be such that $n \geq 2$ and $1 \leq d \leq n-1$. We define the \textit{$d$-plane Radon transform} of $f \in \mathcal{T} := C^\infty(\T^n)$ as
\begin{equation}\label{eq:contRdef}R_df(x,A) := \int_{[0,1]^d} f(x+t_1v_1+\cdots+t_dv_d)dt_1\cdots dt_d\end{equation}
where $A = \{v_1,\dots,v_d\}$ is any set of linearly independent integer vectors $v_i \in \Z^n$. 

It can be shown that $A$ spans a periodic $d$-plane on $\T^n$, and on the other hand, any periodic $d$-plane on $\T^n$ has a basis of integer vectors. We can identify all periodic $d$-planes on $\T^n$ by the elements in the Grassmannian space $\Gr(d,n)$ which is the collection of all $d$-dimensional subspaces of $\Q^n$. We redefine the $d$-plane Radon transform on $\T^n$ as $R_df: \Gr(d,n) \to \C^\infty(\T^n)$ without a loss of data. The definition of $R_d$ extends to the periodic distributions $f \in \mathcal{T}'$ such that $R_df(\cdot,A) \in \mathcal{T}'$ for any $A \in \Gr(d,n)$. We use the shorter notations $R_{d,A}f = R_df(\cdot,A)$ and $X_{d,n} = \T^n \times \Gr(d,n)$.  More details are given in section \ref{sec:prtgr}.

Let $w: \Z^n \times \Gr(d,n) \to (0,\infty)$ be a weight function such that $w(\cdot,A)$ is at most of polynomial decay (\ref{eq:poldecdef}) for any fixed $A \in \Gr(d,n)$. If not said otherwise, then a weight $w$ is always assumed to be of this form. The associated Fourier multipliers on distributions are denoted by $F_w$. We denote the weighted Bessel potential space on the image side by $L_s^{p,l}(X_{d,n};w)$ where $s \in \R$, $p,l \in [1,\infty]$. The usual Bessel potential spaces on $\T^n$ are denoted by $L_s^p(\T^n)$, and $H^s(\T^n) = L_s^2(\T^n)$ is the fractional $L^2$ Sobolev space. The $L_s^{p,l}(X_{d,n};w)$ norms are $\ell^l$ norms over $\Gr(d,n)$ of the $w$-weighted Bessel potential norms of $L_s^p(\T^n;w(\cdot,A))$ with $A \in \Gr(d,n)$. More details are given in section \ref{sec:bessel}.

We show that $L_s^{p,l}(X_{d,n};w)$ are Banach spaces when $p \in [1,\infty]$ in lemma \ref{lem:banach}. Many of our results consider the Hilbert spaces with $p = l = 2$. Most of the theorems in this article would have been unreachable for $R_d$ when $d < n-1$ if we did not include weights in the data spaces. We construct weights which satisfy the assumptions of our theorems in section \ref{sec:constructions}.

\begin{remark} If $d =n-1$, then weights are not that important for the analysis of $R_d$ since $R_d$ maps $f \in H^s(\T^n)$ with $\hat{f}(0) = 0$ continuously to the natural image space $H^s(X_{d,n})$ without setting any weight. Therefore weights are only required at the origin on the Fourier side of the data space. This was demonstrated in the case of $n =2$ and $d=1$ in \cite{IKR19}, or for example in the special case \eqref{eq:recwithoutFourier0mean} of theorem \ref{thm:recwithoutFourier}. \end{remark}

Our first theorem considers the adjoint and the normal operators of $R_d: H^s(\T^n) \to L_s^{2,2}(X_{d,n};w)$. This generalizes \cite[Proposition 11]{IKR19} into higher dimensions. Theorem \ref{thm:adjoint} and corollary \ref{cor:h2results} are proved in section \ref{sec:adjnorm}.

\begin{theorem}[Adjoint and normal operators]\label{thm:adjoint} Let $s \in \R$ and suppose that there exists $C_w > 0$ such that \begin{equation}\sum_{A \in \Omega_k} w(k,A)^2 \leq C_w^2, \quad \Omega_k := \{\, A \in \Gr(d,n) \,;\, k\bot A\,\}\end{equation}
for any $k \in \Z^n$. Then the adjoint of $R_{d}: H^s(\T^n) \to L_s^{2,2}(X_{d,n};w)$ is given by
\begin{equation}\widehat{R_d^*g}(k) = \sum_{A \in \Omega_k} w(k,A)^2\hat{g}(k,A)\end{equation}
and the normal operator $R_d^*R_d: H^s(\T^n) \to H^s(\T^n)$ is the Fourier multiplier associated with $W_k := \sum_{A \in \Omega_k} w(k,A)^2$. In particular, the mapping  $F_{W_k^{-1}}R_d^*: R_d(\mathcal{T}') \to \mathcal{T}'$ is the inverse of $R_d$.
\end{theorem}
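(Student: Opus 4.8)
The plan is to push everything to the Fourier side, where $R_d$ acts as a bare projection onto the modes orthogonal to the plane. The starting point is the identity
\[
\widehat{R_{d,A}f}(k) =
\begin{cases}
\hat{f}(k), & k \bot A,\\
0, & \text{otherwise,}
\end{cases}
\]
which I would obtain by inserting the definition \eqref{eq:contRdef}, changing variables via periodicity, and using that $\int_0^1 e^{2\pi i (k\cdot v_j)t}\,\der t$ equals $1$ when $k\cdot v_j = 0$ and vanishes otherwise, since $k\cdot v_j \in \Z$. This is the only place where the explicit form of $R_d$ is used, and I expect it to be available from the preliminaries in section \ref{sec:pre}.

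For the adjoint I would expand the pairing $\ip{R_df}{g}$ in $L_s^{2,2}(X_{d,n};w)$ directly in Fourier coefficients, writing it as $\sum_{A}\sum_k \vev{k}^{2s}\, w(k,A)^2\, \widehat{R_{d,A}f}(k)\,\overline{\hat g(k,A)}$. Substituting the projection formula annihilates every term with $k \not\bot A$, collapsing the $A$-sum onto $A \in \Omega_k$. After interchanging the two summations (justified below) the pairing becomes $\sum_k \vev{k}^{2s}\,\hat f(k)\,\overline{\sum_{A\in\Omega_k} w(k,A)^2 \hat g(k,A)}$, and comparison with $\ip{f}{R_d^*g}_{H^s}$ reads off $\widehat{R_d^*g}(k) = \sum_{A\in\Omega_k} w(k,A)^2 \hat g(k,A)$.

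The main obstacle is to confirm that this formula genuinely defines a bounded operator into $H^s(\T^n)$, i.e.\ that the (possibly infinite) sum over $\Omega_k$ converges and that the rearrangement above is legitimate. This is exactly where the hypothesis $\sum_{A\in\Omega_k} w(k,A)^2 \leq C_w^2$ enters: by Cauchy--Schwarz,
\begin{equation*}
\begin{aligned}
\abs{\widehat{R_d^*g}(k)}^2
&\leq \Big(\sum_{A\in\Omega_k} w(k,A)^2\Big)\Big(\sum_{A\in\Omega_k} w(k,A)^2\abs{\hat g(k,A)}^2\Big)\\
&\leq C_w^2 \sum_{A\in\Omega_k} w(k,A)^2\abs{\hat g(k,A)}^2,
\end{aligned}
\end{equation*}
and summing $\vev{k}^{2s}$ times this over $k$ yields $\norm{R_d^*g}_{H^s}^2 \leq C_w^2\,\norm{g}_{L_s^{2,2}(X_{d,n};w)}^2$. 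The same bound makes the double sums absolutely convergent, so Fubini applies and the swap is valid. I would pay particular attention to $k=0$, where $\Omega_0 = \Gr(d,n)$ is the full (infinite) Grassmannian: the hypothesis guarantees $W_0 < \infty$, while positivity of $w$ gives $W_0 > 0$.

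The normal operator then follows by composition: since every $A \in \Omega_k$ already satisfies $k \bot A$, the projection indicator in $\widehat{R_{d,A}f}$ is identically $1$ over $\Omega_k$, so $\widehat{R_d^*R_df}(k) = \sum_{A\in\Omega_k} w(k,A)^2\,\hat f(k) = W_k\,\hat f(k)$, that is, $R_d^*R_d = F_{W_k}$. Finally, because $w>0$ we have $0 < W_k \leq C_w^2$ for every $k$, with $W_k^{-1}$ of at most polynomial growth, so $F_{W_k^{-1}}$ is a well-defined multiplier on $\mathcal{T}'$, and $\widehat{F_{W_k^{-1}}R_d^*R_df}(k) = W_k^{-1}W_k\,\hat f(k) = \hat f(k)$ for all $k$. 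This identity is purely modewise, hence extends from $\mathcal{T}$ to all of $\mathcal{T}'$ by duality, proving that $F_{W_k^{-1}}R_d^*$ inverts $R_d$ on $R_d(\mathcal{T}')$.
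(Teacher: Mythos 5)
Your argument is correct and essentially identical to the paper's: both rest on the Fourier-side projection identity $\widehat{R_{d,A}f}(k)=\hat f(k)\delta_{k\bot A}$ (theorem \ref{thm:ilmavirta}), expand the $L_s^{2,2}$ inner product, interchange the two summations via Cauchy--Schwarz, and read off the adjoint and then the normal operator by composition; your explicit bound $\norm{R_d^*g}_{H^s(\T^n)}\le C_w\norm{g}_{L_s^{2,2}(X_{d,n};w)}$ is a welcome addition that the paper defers to the proof of corollary \ref{cor:h2results}. The one overstatement is the parenthetical claim that $W_k^{-1}$ has at most polynomial growth, which does not follow from the hypotheses (only $W_k\le C_w^2$ and pointwise positivity are available, and when $d<n-1$ one can arrange $W_k$ to decay faster than any polynomial along a sequence of frequencies), so $F_{W_k^{-1}}$ is not a priori a multiplier on all of $\mathcal{T}'$ --- but this is harmless for the stated conclusion because, as you yourself note, the inversion is modewise and on the range $R_d(\mathcal{T}')$ the composition returns exactly $\hat f(k)$, which has polynomial growth.
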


Theorem \ref{thm:adjoint} gives a new inversion formula in terms of the adjoint and a Fourier multiplier. Its corollary \ref{cor:h2results} gives new stability estimates on $H^s(\T^n)$. The stability estimates of $R_1$ on $H^s(\T^2)$ were not explicitly written down in \cite{IKR19} but they can be found between the lines. We denote by $R_d^{*,w}$ the adjoint of $R_d$ associated to the weight $w$ when the weight needs to be specified.

\begin{corollary}[Stability estimates]\label{cor:h2results}Suppose that the assumptions of theorem \ref{thm:adjoint} hold, and that there exists $c_w > 0$ such that $W_k \geq c_w^2$ for any $k \in \Z^n$. \begin{enumerate}[label=(\roman*)]
\item  Then $F_{W_k^{-1}}R_d^*: L_s^{2,2}(X_{d,n};w) \to H^s(\T^n)$ is $1/c_w$-Lipschitz.\label{item:h1prop1}
\item Let $f \in \mathcal{T}'$. Then
\begin{equation}\label{eq:normalopcor}\norm{f}_{H^s(\T^n)} \leq \frac{1}{c_w}\norm{R_df}_{L_s^{2,2}(X_{d,n};w)}.\end{equation}\label{item:h1prop2}
\item Let $\tilde{w}(k,A) = \frac{w(k,A)}{\sqrt{W_k}}$ and $p \in [1,\infty]$. Then $R_d^{*,\tilde{w}}R_df = f$ and $\norm{f}_{L_s^p(\T^n)} = \norm{R_d^{*,\tilde{w}}R_df}_{L_s^p(\T^n)}$ for any $f \in \mathcal{T}'$.\label{item:Lspstab}
\end{enumerate}
\end{corollary}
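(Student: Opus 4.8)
The plan is to derive all three statements from Theorem \ref{thm:adjoint} by working on the Fourier side, where the relevant norms diagonalize. Recall that $\norm{f}_{H^s(\T^n)}^2 = \sum_{k \in \Z^n} \vev{k}^{2s} \abs{\hat{f}(k)}^2$ and $\norm{g}_{L_s^{2,2}(X_{d,n};w)}^2 = \sum_{A \in \Gr(d,n)} \sum_{k \in \Z^n} \vev{k}^{2s} w(k,A)^2 \abs{\hat{g}(k,A)}^2$. For \ref{item:h1prop2} I would use the adjoint pairing together with the identification of the normal operator: $\norm{R_d f}_{L_s^{2,2}}^2 = \ip{R_d f}{R_d f}_{L_s^{2,2}} = \ip{f}{R_d^*R_d f}_{H^s}$, and since $R_d^*R_d$ is the Fourier multiplier $W_k$ by Theorem \ref{thm:adjoint}, this equals $\sum_{k} \vev{k}^{2s} W_k \abs{\hat f(k)}^2$. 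The hypothesis $W_k \geq c_w^2$ then gives $\norm{R_d f}_{L_s^{2,2}}^2 \geq c_w^2 \norm{f}_{H^s}^2$, which is \eqref{eq:normalopcor}. For general $f \in \tf'$ this is read with both sides possibly infinite: if $R_d f \notin L_s^{2,2}$ the estimate is vacuous, and otherwise finiteness of the right-hand side forces $f \in H^s(\T^n)$.

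For the Lipschitz bound \ref{item:h1prop1}, since $F_{W_k^{-1}}R_d^*$ is linear it suffices to bound its operator norm, so I would insert the formula $\widehat{F_{W_k^{-1}}R_d^* g}(k) = W_k^{-1}\sum_{A \in \Omega_k} w(k,A)^2 \hat g(k,A)$ from Theorem \ref{thm:adjoint} into the $H^s$ norm. The key estimate is a Cauchy--Schwarz inequality in which the weight is split as $w(k,A)^2 = w(k,A)\cdot w(k,A)$:
\[ \abs{\,\sum_{A \in \Omega_k} w(k,A)^2 \hat g(k,A)\,}^2 \leq W_k \sum_{A \in \Omega_k} w(k,A)^2 \abs{\hat g(k,A)}^2. \]
This produces a factor $W_k$ that cancels one power of $W_k^{-2}$ and leaves $W_k^{-1} \leq c_w^{-2}$; extending the inner sum from $\Omega_k$ to all of $\Gr(d,n)$ (which only adds nonnegative terms) then bounds the whole expression by $c_w^{-2}\norm{g}_{L_s^{2,2}}^2$, giving the constant $1/c_w$.

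Statement \ref{item:Lspstab} is the cleanest: I would apply Theorem \ref{thm:adjoint} to the renormalized weight $\tilde w$, after checking it is admissible. Indeed $\tilde w > 0$ is of polynomial decay since $\tilde w(k,A) \leq w(k,A)/c_w$, and its Grassmannian sum is
\[ \sum_{A \in \Omega_k} \tilde w(k,A)^2 = \frac{1}{W_k}\sum_{A \in \Omega_k} w(k,A)^2 = 1, \]
so the hypothesis of Theorem \ref{thm:adjoint} holds with constant $1$. Consequently the normal operator $R_d^{*,\tilde w}R_d$ is the Fourier multiplier whose symbol is identically $1$, i.e.\ the identity on $\tf'$, whence $R_d^{*,\tilde w}R_d f = f$ for every $f \in \tf'$. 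The norm identity $\norm{f}_{L_s^p(\T^n)} = \norm{R_d^{*,\tilde w}R_d f}_{L_s^p(\T^n)}$ is then immediate and holds for every $p \in [1,\infty]$ precisely because the reconstruction is exact at the level of distributions rather than merely in an $L^2$ sense.

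None of these steps carries a serious analytic difficulty once Theorem \ref{thm:adjoint} is available; the corollary is essentially bookkeeping on the Fourier side. The one place demanding genuine care is the Cauchy--Schwarz step in \ref{item:h1prop1}: the correct pairing of the weights is exactly what lets the $W_k^{-2}$ coming from the multiplier combine with the $W_k$ from Cauchy--Schwarz to leave the bounded factor $W_k^{-1} \leq c_w^{-2}$, whereas any other grouping fails to close the estimate. A secondary point to treat cleanly is the interpretation of the inequalities for arbitrary $f \in \tf'$, where one side may be infinite and the admissibility of $\tilde w$ must be verified before Theorem \ref{thm:adjoint} can be reused.
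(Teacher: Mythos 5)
Your argument follows the paper's proof essentially step for step: the same Cauchy--Schwarz splitting $w(k,A)^2 = w(k,A)\cdot w(k,A)$ over $\Omega_k$ for part \ref{item:h1prop1}, the same pairing $\norm{R_df}^2_{L_s^{2,2}} = (f, F_{W_k}f)_{H^s}$ for part \ref{item:h1prop2}, and the same renormalization $\tilde w = w/\sqrt{W_k}$ with $\tilde W_k \equiv 1$ for part \ref{item:Lspstab}. The only flaw is in your admissibility check for $\tilde w$: you justify that $\tilde w$ is at most of polynomial decay via $\tilde w(k,A) \leq w(k,A)/c_w$, but the polynomial-decay condition \eqref{eq:poldecdef} is a \emph{lower} bound $\omega(k) \geq C\vev{k}^{-N}$, so an upper bound on $\tilde w$ is irrelevant. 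The correct estimate uses the upper bound $W_k \leq C_w^2$ from the hypothesis of theorem \ref{thm:adjoint}, giving $\tilde w(k,A) = w(k,A)/\sqrt{W_k} \geq w(k,A)/C_w \geq (C/C_w)\vev{k}^{-N}$; this is exactly what remark \ref{rm:joku2} records. The slip is cosmetic and immediately repairable, and everything else, including your care about reading \eqref{eq:normalopcor} with both sides possibly infinite for general $f \in \mathcal{T}'$, is sound.
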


In order to prove $L_s^p \lesssim L_s^p$ type stability \ref{item:Lspstab} for more general weights in terms of the normal operator, one would have to show that $F_{W_k^{-1}}$ is a bounded $L^p$ multiplier. Other stability estimates on $L_s^p(\T^n)$ are given in terms of $R_df$ in proposition \ref{prop:stabilitymain}. These stability estimates follow from corollary \ref{cor:h2results} and the Sobolev inequality on $\T^n$. This method requires additional smoothness of $R_df$ in order to control the norm of $f$ due to the use of the Sobolev inequality.

We have proved three other new inversion formulas for $R_d$ as well. The other two inversion formulas are given in proposition \ref{prop:L1rec} and its corollary \ref{cor:dist}. Proposition \ref{prop:L1rec} generalizes the inversion formula \cite[Theorem 1]{IKR19} into higher dimensions. Its corollary \ref{cor:dist} generalizes the formula for all periodic distributions using the structure theorem. We state the third inversion formula here since we find it to be the most interesting one. Theorem \ref{thm:recwithoutFourier} is proved in the end of section \ref{sec:L1}.

\begin{theorem}[Periodic filtered backprojections]\label{thm:recwithoutFourier} Suppose that $f \in \mathcal{T}'$. Let $w: \Z^n \times \Gr(d,n) \to \R$ be a weight so that
\begin{equation}\sum_{A \in \Omega_k} w(k,A) = 1, \quad \Omega_k := \{\, A \in \Gr(d,n) \,;\, k\bot A\,\}\end{equation} and the series is absolutely convergent for any $k \in \Z^n$. (The weight does not have to generate a norm or have at most of polynomial decay.) Then
\begin{equation}(f,h) = \sum_{A \in \Gr(d,n)}(F_{w(\cdot,A)}R_{d,A}f,h), \quad \forall h \in C^\infty(\T^n).\end{equation}
Moreover, if $f$ has zero average and $d = n-1$, then \begin{equation}\label{eq:recwithoutFourier0mean}f = \sum_{A \in \Gr(d,n)} R_{d,A}f.\end{equation}
\end{theorem}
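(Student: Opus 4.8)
The plan is to work entirely on the Fourier side, exploiting the fact that $R_{d,A}$ becomes a clean projection-type multiplier after taking Fourier coefficients. First I would recall the key computation (which must be established in the preliminaries, in the spirit of the formula appearing in theorem \ref{thm:adjoint}): for $f \in \mathcal{T}'$ and $A \in \Gr(d,n)$, the Fourier coefficients of $R_{d,A}f$ satisfy $\widehat{R_{d,A}f}(k) = \hat{f}(k)$ when $k \bot A$ and vanish otherwise. In other words, $R_{d,A}$ acts on the Fourier side as multiplication by the indicator of the set $\{k : k \bot A\}$. Since $F_{w(\cdot,A)}$ is by definition the Fourier multiplier with symbol $w(k,A)$, we get $\widehat{F_{w(\cdot,A)}R_{d,A}f}(k) = w(k,A)\,\mathbf{1}[k \bot A]\,\hat{f}(k)$.

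With this in hand, the identity $(f,h) = \sum_{A}(F_{w(\cdot,A)}R_{d,A}f,h)$ reduces to a statement about Fourier coefficients. I would test both sides against $h \in C^\infty(\T^n)$ and use Parseval: the pairing $(f,h)$ equals $\sum_{k} \hat{f}(k)\overline{\hat{h}(k)}$ (up to the chosen convention), and the right-hand side becomes
\begin{equation}
\sum_{A \in \Gr(d,n)} \sum_{k \in \Z^n} w(k,A)\,\mathbf{1}[k \bot A]\,\hat{f}(k)\,\overline{\hat{h}(k)}.
\end{equation}
The plan is then to swap the order of summation. For each fixed $k$, summing over $A$ restricts to $A \in \Omega_k$, and the hypothesis $\sum_{A \in \Omega_k} w(k,A) = 1$ collapses the inner sum to exactly $\hat{f}(k)\overline{\hat{h}(k)}$. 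Summing over $k$ recovers $(f,h)$, proving the first claim.

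The main obstacle is justifying the interchange of the two summations, since $f$ is only a distribution (so $\hat f(k)$ grows at most polynomially) and the $A$-sum over the whole Grassmannian need not be finite. Here I would lean on two facts: $h \in C^\infty(\T^n)$ forces $\hat{h}(k)$ to decay faster than any polynomial, which makes the $k$-sum absolutely convergent after bounding $\hat f(k)$ polynomially; and the hypothesis states the series $\sum_{A \in \Omega_k} w(k,A)$ is absolutely convergent for each fixed $k$. Combining these, I would dominate the double series by $\sum_k |\hat f(k)|\,|\hat h(k)|\,\bigl(\sum_{A\in\Omega_k}|w(k,A)|\bigr)$ and invoke Fubini–Tonelli to legitimize the rearrangement. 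The only subtle point is whether $\sum_{A\in\Omega_k}|w(k,A)|$ is bounded uniformly (or at most polynomially) in $k$; if the problem does not guarantee this directly, I would absorb any such growth into the super-polynomial decay of $\hat h$, which is available precisely because $h$ is smooth.

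For the final assertion \eqref{eq:recwithoutFourier0mean}, I would specialize to $d = n-1$ and a zero-average $f$, so $\hat f(0) = 0$. When $d = n-1$, each hyperplane direction $A$ is the orthogonal complement of a primitive lattice line, so for a given nonzero $k$ there is exactly one $A \in \Gr(n-1,n)$ with $k \bot A$; hence $\Omega_k$ is a singleton for $k \neq 0$. Choosing $w \equiv 1$ then automatically satisfies $\sum_{A \in \Omega_k} w(k,A) = 1$ for every $k \neq 0$, and the $k=0$ term is irrelevant because $\hat f(0)=0$. Applying the first part with this weight and reading off Fourier coefficients gives $\hat f(k) = \sum_{A} \widehat{R_{d,A}f}(k)$ for all $k$, which is exactly $f = \sum_{A \in \Gr(d,n)} R_{d,A}f$. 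The one point to verify carefully is the claim that $\Omega_k$ is a singleton in the codimension-one case, which follows from the correspondence between rational hyperplanes and primitive integer normal vectors.
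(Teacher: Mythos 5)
Your proposal is correct and follows essentially the same route as the paper: both rest on the Fourier-side identity $\widehat{R_{d,A}f}(k)=\hat f(k)\delta_{k\bot A}$ (theorem \ref{thm:ilmavirta}), collapse the sum over $A$ using $\sum_{A\in\Omega_k}w(k,A)=1$, and derive \eqref{eq:recwithoutFourier0mean} from the fact that $\Omega_k$ is a singleton for $k\neq 0$ when $d=n-1$. The only difference is that you flag the interchange-of-summation issue explicitly; the paper's proof performs this interchange formally under the stated absolute-convergence hypothesis, so your extra caution is, if anything, more careful than the published argument.
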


\begin{remark} The author is not aware of a similar formula for the inverse Radon transform in earlier literature. We emphasize that this new result implies that a clever sum of the $(n-1)$-plane Radon transform data is the target function. If $n=2$, this holds true for the X-ray transform of compactly supported functions on the plane $\R^2$. We further remark that it is easy to recover the average of a function and filter it out from $R_{n-1}f$.
\end{remark}

Finally, we state our results on regularization. These results generalize \cite[Theorems 2 and 3]{IKR19} into higher dimensions. The proofs are given in section \ref{sec:stabtikh}.  Let $g \in L_r^{2,l}(X_{d,n};w)$. We consider the \textit{Tikhonov minimization problem}
\begin{equation}\label{eq:tikh}
\underset{f \in H^t(\T^n)}{\arg\min} \left(\norm{R_df-g}_{L_r^{2,l}(X_{d,n};w)}^l + \alpha \norm{f}_{H^s(\T^n)}^2\right).
\end{equation}
for any $n \geq 2$, $1 \leq d \leq n-1$, $\alpha > 0$, $l = 2$, and $r, s, t\in \R$. We do not fix the regularity of $f$ a priori but the space $H^t(\T^n)$ will be found after solving the minimization problem for distributions in general.

Let $w$ be a weight, $z \in \R$, and $\alpha >0$. We define the operator $P_{w,z}^\alpha: \mathcal{T}' \to \mathcal{T}'$ to be the Fourier multiplier associated with
\begin{equation}p_{w,z}^\alpha(k) := \frac{1}{W_k+\alpha\vev{k}^{2z}}.
\end{equation}

\begin{theorem}[Tikhonov minimization problem]\label{thm:tikhmin} Let $w$ be a weight such that $c_w^2 \leq W_k \leq C_w^2$ for some uniform constants $c_w, C_w > 0$. Suppose that $\alpha >0$, and $s \geq r$. Then the unique minimizer of the Tikhonov minimization problem (\ref{eq:tikh}) with $g \in L_r^{2,2}(X_{d,n};w)$ is given by $f = P_{w,s-r}^\alpha R_d^*g \in H^{2s-r}(\T^n)$.
\end{theorem}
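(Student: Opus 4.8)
The plan is to diagonalize the Tikhonov functional in the Fourier basis, where it decouples into a family of independent scalar quadratic minimizations indexed by $k \in \Z^n$, each of which can be solved in closed form. Writing $J(f) := \norm{R_df-g}_{L_r^{2,2}(X_{d,n};w)}^2 + \alpha\norm{f}_{H^s(\T^n)}^2$ (recall $l=2$), I would use the projection--slice relation $\widehat{R_df}(k,A) = \hat{f}(k)$ for $A\in\Omega_k$ and $\widehat{R_df}(k,A)=0$ otherwise (from section \ref{sec:prtgr}) together with the explicit norms to obtain
\[ J(f) = C + \sum_{k\in\Z^n}\phi_k(\hat{f}(k)), \qquad \phi_k(z) := \vev{k}^{2r}\sum_{A\in\Omega_k} w(k,A)^2\abs{z-\hat{g}(k,A)}^2 + \alpha\vev{k}^{2s}\abs{z}^2, \]
where $C := \sum_k\vev{k}^{2r}\sum_{A\notin\Omega_k} w(k,A)^2\abs{\hat{g}(k,A)}^2 \leq \norm{g}_{L_r^{2,2}(X_{d,n};w)}^2$ is finite and independent of $f$. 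Since every summand $\phi_k\geq 0$ and the coefficients $\hat{f}(k)$ may be prescribed independently over $k$, minimizing $J$ over $\mathcal{T}' = \bigcup_t H^t(\T^n)$ reduces to minimizing each $\phi_k$ separately over $\C$. I prefer this direct route over the Euler--Lagrange/operator formulation because the misfit and penalty use different Sobolev indices ($r$ versus $s$), so the operator approach would require a mismatched-index adjoint, whereas the Fourier computation sidesteps this.

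For fixed $k$, the map $\phi_k$ is a strictly convex quadratic in $z\in\C$ (strict convexity is ensured by $\alpha\vev{k}^{2s}>0$), hence has a unique minimizer. Completing the square, or equivalently setting the Wirtinger derivative $\partial_{\bar z}\phi_k=0$, yields the normal equation
\[ \bigl(\vev{k}^{2r}W_k + \alpha\vev{k}^{2s}\bigr)z = \vev{k}^{2r}\sum_{A\in\Omega_k} w(k,A)^2\hat{g}(k,A) = \vev{k}^{2r}\,\widehat{R_d^*g}(k), \]
where $W_k = \sum_{A\in\Omega_k} w(k,A)^2$ and the last identity is the adjoint formula of theorem \ref{thm:adjoint} (whose Fourier symbol is independent of the Sobolev index, so there is no ambiguity in applying $R_d^*$ to $g\in L_r^{2,2}$). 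Dividing by $\vev{k}^{2r}$ gives the unique minimizer $\hat{f}(k) = (W_k+\alpha\vev{k}^{2(s-r)})^{-1}\widehat{R_d^*g}(k) = p_{w,s-r}^\alpha(k)\,\widehat{R_d^*g}(k)$, that is $f = P_{w,s-r}^\alpha R_d^*g$, exactly the claimed formula; assembling the per-frequency minimizers into one distribution produces the global minimizer provided $f$ lies in the finiteness domain of $J$, which I verify next.

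It remains to establish $f\in H^{2s-r}(\T^n)$ and admissibility. Cauchy--Schwarz gives $\abs{\widehat{R_d^*g}(k)}^2 \leq W_k\,G_k$ with $G_k := \sum_{A\in\Omega_k} w(k,A)^2\abs{\hat{g}(k,A)}^2$, and therefore, using $2(2s-r)=4(s-r)+2r$,
\[ \vev{k}^{2(2s-r)}\abs{\hat{f}(k)}^2 \leq \frac{\vev{k}^{4(s-r)}W_k}{\bigl(W_k+\alpha\vev{k}^{2(s-r)}\bigr)^2}\,\vev{k}^{2r}G_k \leq \frac{C_w^2}{\alpha^2}\,\vev{k}^{2r}G_k, \]
where the last bound drops $W_k\geq 0$ in the denominator and uses $W_k\leq C_w^2$. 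Summing over $k$ and noting $\sum_k\vev{k}^{2r}G_k\leq\norm{g}_{L_r^{2,2}(X_{d,n};w)}^2$ shows $\norm{f}_{H^{2s-r}(\T^n)}\leq \frac{C_w}{\alpha}\norm{g}_{L_r^{2,2}(X_{d,n};w)}$, so $f\in H^{2s-r}(\T^n)$. Finally, the hypothesis $s\geq r$ is what makes the result consistent: it gives $2s-r\geq s$, so $f\in H^{2s-r}(\T^n)\subseteq H^s(\T^n)$, and it guarantees that the misfit term is finite on all of $H^s$, since $\norm{R_df}_{L_r^{2,2}(X_{d,n};w)}^2 = \sum_k\vev{k}^{2r}W_k\abs{\hat{f}(k)}^2\leq C_w^2\norm{f}_{H^r(\T^n)}^2\leq C_w^2\norm{f}_{H^s(\T^n)}^2$; thus the finiteness domain of $J$ is exactly $H^s(\T^n)$, on which the minimizer indeed sits.

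The computation is essentially routine once the decoupling is in place, so I expect the only genuinely delicate point to be the conceptual one: justifying that the frequency-by-frequency minimization really yields the minimizer over the whole distribution space $\mathcal{T}'$, and that the a priori unspecified index $t$ is determined a posteriori to be $2s-r$. This is where the nonnegativity of each $\phi_k$, the independence of the coefficients $\hat{f}(k)$, and the regularity estimate (which shows the formal minimizer is admissible) are used in concert. The upper bound $W_k\leq C_w^2$ together with $s\geq r$ drives the regularity gain and admissibility, while the lower bound $W_k\geq c_w^2$, although not needed for existence or uniqueness here, guarantees the nondegeneracy of $R_d^*R_d$ underlying the setup.
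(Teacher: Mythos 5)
Your proposal is correct and follows essentially the same route as the paper: decouple the Tikhonov functional over Fourier modes using $\widehat{R_df}(k,A)=\hat f(k)\delta_{k\perp A}$, discard the $f$-independent part, and minimize each scalar quadratic to read off $\hat f(k)=p_{w,s-r}^\alpha(k)\widehat{R_d^*g}(k)$. The only differences are cosmetic: you complete the square in the complex variable directly where the paper splits into real and imaginary parts, and you verify $f\in H^{2s-r}(\T^n)$ by an explicit Cauchy--Schwarz estimate where the paper cites the mapping properties of $R_d^*$ and $P_{w,s-r}^\alpha$ established just before its proof.
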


The last theorem we state in the introduction generalizes the result \cite[Theorem 3]{IKR19} on regularization strategies to higher dimensions.

\begin{theorem}[Regularization strategy]\label{thm:regstrat} Let $w$ be a weight such that $c_w^2 \leq W_k \leq C_w^2$ for some uniform constants $c_w, C_w > 0$. Suppose $r,t,s,\delta \in \R$ are constants such that $2s+t \geq r$, $\delta \geq 0$, and $s > 0$. Let $g \in L_t^{2,2}(X_{d,n}; w)$ and $f \in H^{r+\delta}(\T^n)$.

Then the Tikhonov regularized reconstruction operator $P_{w,s}^\alpha R_d^*$ is a regularization strategy in the sense that
\begin{equation}\lim_{\epsilon \to 0} \sup_{\norm{g}_{L_t^{2,2}(X_{d,n}; w)}\leq \epsilon} \norm{P_{w,s}^{\alpha(\epsilon)} R_d^*(R_df+g)-f}_{H^r(\T^n)} = 0\end{equation} where $\alpha(\epsilon) =\sqrt{\epsilon}$ is an admissible choice of the regularization parameter.

Moreover, if $\norm{g}_{L_t^{2,2}(X_{d,n};w)} \leq \epsilon$, $0 < \delta < 2s$, and $0 < \alpha \leq c_w^2(2s/\delta-1)$, we have a quantitative convergence rate
\begin{equation}\label{eq:quantitative}\begin{split}
&\norm{P_{w,s}^\alpha R_d^*(R_d f +g) -f}_{H^r(\T^n)} \\
&\,\,\leq \alpha^{\delta/2s}c_w^{-\delta/s}C(\delta/2s)\norm{f}_{H^{r+\delta}(\T^n)} + C_w^3c_w^{-2}\frac{\epsilon}{\alpha}\end{split}\end{equation}
where $C(x) = x(x^{-1}-1)^{1-x}$.
\end{theorem}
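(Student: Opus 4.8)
The plan is to decompose the reconstruction error into a \emph{bias} term coming from regularization with exact data and a \emph{noise propagation} term, and to estimate each on the Fourier side. By linearity of $R_d^*$ and of the multiplier $P_{w,s}^\alpha$,
\begin{equation*}
P_{w,s}^\alpha R_d^*(R_df+g)-f = \left(P_{w,s}^\alpha R_d^*R_df-f\right) + P_{w,s}^\alpha R_d^*g,
\end{equation*}
so the triangle inequality in $H^r(\T^n)$ reduces everything to bounding the two summands. The key inputs are Theorem \ref{thm:adjoint}, which identifies $R_d^*R_d$ with the Fourier multiplier $W_k$ and gives $\widehat{R_d^*g}(k)=\sum_{A\in\Omega_k}w(k,A)^2\hat g(k,A)$, together with the hypotheses $c_w^2\le W_k\le C_w^2$.

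For the bias term I would pass to Fourier coefficients. Using $\widehat{R_d^*R_df}(k)=W_k\hat f(k)$ and the definition of $P_{w,s}^\alpha$, one has $\widehat{\left(P_{w,s}^\alpha R_d^*R_df-f\right)}(k) = -\frac{\alpha\vev{k}^{2s}}{W_k+\alpha\vev{k}^{2s}}\hat f(k)$, so that $\norm{P_{w,s}^\alpha R_d^*R_df-f}_{H^r(\T^n)}^2=\sum_k\vev{k}^{2(r+\delta)}\abs{\hat f(k)}^2\cdot\vev{k}^{-2\delta}\left(\frac{\alpha\vev{k}^{2s}}{W_k+\alpha\vev{k}^{2s}}\right)^2$. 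For the qualitative statement it suffices to note that the bracketed factor is bounded by $1$ and tends to $0$ as $\alpha\to 0$ for each fixed $k$, whence dominated convergence over $\Z^n$ (with summable majorant $\vev{k}^{2(r+\delta)}\abs{\hat f(k)}^2$, using $f\in H^{r+\delta}$) shows the bias tends to $0$ as $\alpha\to 0$. For the quantitative rate I would instead maximize the multiplier: using $W_k\ge c_w^2$ (which makes the filter largest) and substituting $y=\vev{k}^{2s}\ge 1$ and then $u=\alpha y/(c_w^2+\alpha y)\in(0,1)$, the problem becomes maximizing $u^{2-\delta/s}(1-u)^{\delta/s}$. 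The maximizer is $u^\ast=1-\delta/(2s)$, which corresponds to a frequency $y\ge 1$ precisely when $\alpha\le c_w^2(2s/\delta-1)$; this is where that hypothesis enters. Evaluating and taking square roots produces exactly $\alpha^{\delta/2s}c_w^{-\delta/s}C(\delta/2s)$ with $C(x)=x^x(1-x)^{1-x}=x(x^{-1}-1)^{1-x}$, and the constraint $0<\delta<2s$ guarantees $u^\ast\in(0,1)$.

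For the noise term I would again work on the Fourier side. Writing $w(k,A)^2\hat g(k,A)=w(k,A)\cdot w(k,A)\hat g(k,A)$ and applying Cauchy--Schwarz over $A\in\Omega_k$ gives $\abs{\widehat{R_d^*g}(k)}^2\le W_k\sum_{A\in\Omega_k}w(k,A)^2\abs{\hat g(k,A)}^2$. Hence
\begin{equation*}
\norm{P_{w,s}^\alpha R_d^*g}_{H^r(\T^n)}^2 \le \sup_{k}\left(\frac{\vev{k}^{2(r-t)}W_k}{(W_k+\alpha\vev{k}^{2s})^2}\right)\norm{g}_{L_t^{2,2}(X_{d,n};w)}^2,
\end{equation*}
after restoring the weight $\vev{k}^{2t}$ into the data norm (restriction to $\Omega_k$ only drops nonnegative terms). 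The supremum is estimated using $c_w^2\le W_k\le C_w^2$ together with the hypothesis $2s+t\ge r$, so that $\vev{k}^{r-t}\le\vev{k}^{2s}$ and $W_k+\alpha\vev{k}^{2s}\ge\alpha\vev{k}^{2s}$; this bounds it by a constant multiple of $\alpha^{-2}$, and tracking the constants gives the factor $C_w^3c_w^{-2}$ and the estimate $C_w^3c_w^{-2}\,\epsilon/\alpha$ once $\norm{g}\le\epsilon$. Combining the two estimates proves the quantitative rate \eqref{eq:quantitative}. The qualitative statement then follows with $\alpha(\epsilon)=\sqrt\epsilon$: the bias tends to $0$ because $\alpha(\epsilon)\to0$, while the noise term is at most $C_w^3c_w^{-2}\sqrt\epsilon\to0$ uniformly in $\norm{g}\le\epsilon$; admissibility of $\alpha(\epsilon)=\sqrt\epsilon$ is exactly the pair of limits $\alpha(\epsilon)\to0$ and $\epsilon/\alpha(\epsilon)\to0$.

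I would expect the main obstacle to be the constrained scalar optimization in the bias term: one must identify the exact maximizer of $u\mapsto u^{2-\delta/s}(1-u)^{\delta/s}$, recognize the resulting value as $C(\delta/2s)$, and --- crucially --- verify that the maximizer corresponds to a frequency $\vev{k}\ge 1$, which forces the hypotheses $\alpha\le c_w^2(2s/\delta-1)$ and $0<\delta<2s$. The noise estimate is comparatively routine, the only care being the Cauchy--Schwarz pairing that reproduces the weight $W_k$ so that the data norm $L_t^{2,2}(X_{d,n};w)$ appears with the correct power.
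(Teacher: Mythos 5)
Your proposal is correct and follows essentially the same route as the paper: the same bias/noise decomposition via Theorem \ref{thm:adjoint}, the same multiplier bound and dominated convergence for the qualitative limit, the same scalar maximization yielding $(c_w^{-2}\alpha)^{\delta/2s}C(\delta/2s)$ under $\alpha\le c_w^2(2s/\delta-1)$, and the same $\Order(\epsilon/\alpha)$ noise bound. The only cosmetic differences are that you carry out the optimization explicitly rather than citing \cite[Eq. (38)]{IKR19}, and your direct Cauchy--Schwarz treatment of the noise term in fact yields the slightly better constant $C_w$ in place of $C_w^3c_w^{-2}$.
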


\begin{remark} The optimal rate of convergence with respect to $\epsilon >0$ can be found by choosing the regularization parameter $\alpha(\epsilon)$ so that the terms on the right hand side of (\ref{eq:quantitative}) are of the same order.
\end{remark}

\begin{flushleft}
\textbf{Acknowledgements.} This work was supported by the Academy of Finland (Center of Excellence in Inverse Modelling and Imaging, grant numbers 284715 and 309963). The author is grateful to Joonas Ilmavirta who has shared his insight of the questions studied in the article. The author wishes to thank Giovanni Covi, Keijo Mönkkönen and Mikko Salo for their valuable comments on the manuscript and suggestions for improvements. The author thanks the anonymous referees for their helpful comments.
\end{flushleft}

\section{Preliminaries}\label{sec:pre}
\subsection{Periodic Radon transforms and Grassmannians}\label{sec:prtgr}

We denote by $\mathcal{T}$ the set $C^\infty(\T^n)$ and $\mathcal{T}'$ its dual space, i.e.~the space of periodic distributions. Denote by $G_d^n$ the set of linearly independent unordered $d$-tuples in $\Z^n\setminus 0$. We may write any element $A \in G_d^n$ as $A = \{v_1,\dots,v_d\}$. The elements in the set $G_d^n$ span all periodic $d$-planes on $\T^n$.

Suppose that $f \in \mathcal{T}$. We define the \textit{$d$-plane Radon transform} of $f$ as
\begin{equation}R_df(x,A) := \int_{[0,1]^d} f(x+t_1v_1+\cdots+t_dv_d)dt_1\cdots dt_d.\end{equation} We remark that $R_d: \mathcal{T} \to \mathcal{T}^{G_d^n}$, $R_df: \T^n \times G_d^n \to \C$ and $R_df(\cdot,A): \T^n \to \C$.

Denote the duality pairing between $\mathcal{T}'$ and $\mathcal{T}$ by $(\cdot,\cdot)$. If $f,g \in \mathcal{T}$, it follows easily from Fubini's theorem that \begin{equation}(f,R_dg(\cdot,A)) = (R_df(\cdot,A),g).\end{equation} We define the $d$-plane Radon transform for any $f \in \mathcal{T}'$ and $A \in G_d^n$ simply as
\begin{equation} (R_df(\cdot,A))(g) = (f,R_dg(\cdot,A)) \quad \forall g \in \mathcal{T}.
\end{equation} This is the unique continuous extension of $R_d(\cdot,A)$ to the periodic distributions. The Fourier series coefficients of $R_df(\cdot,A)$ are defined as usual.

We denote the \textit{Grassmannian} of $d$-dimensional subspaces of $\Q^n$ by $\Gr(d,n)$. If $A, B \in G_d^n$ span the same subspace of $\Q^n$, then $A$ and $B$ represent the same element in $\Gr(d,n)$, and $R_df(\cdot,A) = R_df(\cdot,B)$ holds for any $f \in \mathcal{T}'$ by theorem \ref{thm:ilmavirta}. On the other hand, for every $A \in \Gr(d,n)$ there exists $\tilde{A} \in G_d^n$ that spans $A$. This allows one to define the Radon transform as $R_df: \Gr(d,n) \to \mathcal{T}'$ without data redundancy by setting $R_df(\cdot,A) := R_df(\cdot,\tilde{A})$ where $\tilde{A} \in G_d^n$ spans $A \in \Gr(d,n)$. This connection to the Grassmannians was mentioned earlier in \cite{I15} but was not directly used.

\begin{remark} Let us denote the \textit{projective space} $\P^{n-1} := \Gr(1,n)$. The \textit{height} of $P \in \P^{n-1}$ is defined by $H(P) = \gcd(p)^{-1}\abs{p}_{\ell^\infty}$ using any representative $p$ of $P$. The projective space $\P^1$ and the height were used in \cite{IKR19} to analyze the number of projection directions required to reconstruct the Fourier series coefficients of a phantom up to a fixed radius. This question reduces to Schanuel's theorem \cite{S64} in algebraic number theory. This analysis in \cite{IKR19} extends to higher dimensions when $d = n-1$.
\end{remark}

\subsection{Bessel potential spaces and data spaces}
\label{sec:bessel}

Let $f \in \mathcal{T}'$. We mean by the expression $\sum_{k \in \Z^n} \vev{k}^s \hat{f}(k)e^{2\pi i k \cdot x}$ the limit \begin{equation}\tilde{f}(x) := \lim_{r \to \infty} f_{r,s}(x),\quad f_{r,s}(x):= \sum_{\abs{k}_{\ell^\infty(\Z^n)} \leq r} \vev{k}^s \hat{f}(k)e^{2\pi i k \cdot x},\end{equation}
in the sense of distributions. If $f \in L^p(\T^n)$ with $p \in (1,\infty)$, then $f_{r,0} \to f$ in $L^p(\T^n)$ as $r\to \infty$. Moreover, if $p \in (1,\infty]$, then $\tilde{f} = f$ almost everywhere as the pointwise limit by a higher dimensional Carleson-Hunt theorem. These facts are proved for example in \cite[Theorems 4.2 and 4.3]{F12}. If $p=1$, one can utilize the Cesàro sums to reconstruct a distribution in $L^1(\T^n)$ from its Fourier series.

For any Sobolev scale $s \in \R$, we define the \textit{Bessel potential spaces} $L_s^p(\T^n) \subset \mathcal{T}'$ by the relation $f \in L_s^p(\T^n)$ if and only if $(1-\Delta)^{s/2}f \in L^p(\T^n)$ (see e.g. \cite{BT13}). We define the \textit{Bessel potential norms} by \begin{equation}\norm{f}_{L_s^p(\T^n)} := \norm{(1-\Delta)^{s/2}f}_{L^p(\T^n)}.\end{equation} Then the space $L_s^p(\T^n) \subset \mathcal{T}'$ consists of all $f \in \mathcal{T}'$ with $\norm{f}_{L_s^p(\T^n)} < \infty$. If $p \in (1,\infty)$ and $s \in \R$, then
\begin{equation}\begin{split}\norm{f}_{L_s^p(\T^n)} &= \lim_{r\to\infty}\norm{\sum_{\abs{k}_{\ell^\infty(\Z^n)}\leq r} \vev{k}^s \hat{f}(k)e^{2\pi i k \cdot x}}_{L^p(\T^n)}, \\
\norm{f}_{H^s(\T^n)} &= \sqrt{\sum_{k \in \Z^k} \vev{k}^{2s}\abs{\hat{f}(k)}^2}\end{split}\end{equation} where $\vev{k}=(1+\abs{k}^2)^{1/2}$ as usual. When $p \in (1,\infty)$, one has equivalently that $f \in L_s^p(\T^n)$ if and only if $(1-\Delta)^{s/2}f \in L^p(\T^n)$ in terms of the $L^p$ convergent Fourier series and $f\in \mathcal{T}'$. Moreover, for any $p \in (1,\infty]$ and $f \in L_s^p(\T^n)$ it holds that \begin{equation}\norm{f}_{L_s^p(\T^n)} = \norm{\lim_{r\to\infty}\sum_{\abs{k}_{\ell^\infty(\Z^n)}\leq r} \vev{k}^s \hat{f}(k)e^{2\pi i k \cdot x}}_{L^p(\T^n)}\end{equation} where the limit is taken pointwise since the Fourier series converges almost everywhere. If $p = 2$, then $H^s(\T^n) = L_s^p(\T^n)$ is the fractional $L^2$ Sobolev space. If $p \in [1,\infty]$ and $s = 0$, then the $L_0^p(\T^n)$ and $L^p(\T^n)$ norms agree. The Bessel potential spaces are used as domains of $R_d$ in this work, which extends studies of the case $p = 2$ in \cite{I15, IKR19}.

If $\omega: \Z^n \to (0,\infty)$ and $f \in \mathcal{T}'$, then we define the $\omega$-weighted norms by
\begin{equation}\norm{f}_{L_s^p(\T^n; \omega)} := \norm{F_\omega f}_{L_s^p(\T^n)}\end{equation}
where $F_\omega$ is the Fourier multiplier of $\omega$. We say that a weight $\omega: \Z^n \to (0,\infty)$ is \textit{at most of polynomial decay} if there exists $C, N > 0$ such that
\begin{equation}\label{eq:poldecdef}\omega(k) \geq C\vev{k}^{-N} \quad \forall k \in \Z^n.\end{equation}

We next define suitable data spaces that contain ranges of $R_d$ when its domains are restricted to the Bessel potential spaces. Let us denote $X_{d,n} := \T^n \times \Gr(d,n)$ to keep our notation shorter. We generalize the data space given in \cite{IKR19} to all $n \geq 2$, $1 \leq d\leq n-1$, and $p \in [1,\infty]$, using the Grassmannians, the Bessel potential spaces and weights.

Let $1 \leq d \leq n-1$ and $w: \Z^n \times \Gr(d,n) \to (0,\infty)$ be a weight function such that $w(\cdot,A)$ is at most of polynomial decay for any fixed $A \in \Gr(d,n)$. We always assume in this work that the weight is at most of polynomial decay. We say that a (generalized) function $g: X_{d,n} \to \C$ belongs to $L_{s}^{p,l}(X_{d,n}; w)$ with $1 \leq l < \infty$ if the norm
\begin{equation} \norm{g}_{L_{s}^{p,l}(X_{d,n}; w)}^l := \sum_{A \in \Gr(d,n)} \norm{g(\cdot,A)}_{L_s^p(\T^n; w(\cdot,A))}^l
\end{equation}
is finite and $g(\cdot,A) \in \mathcal{T}'$ for any fixed $A \in \Gr(d,n)$. Similarly, if $l = \infty$, we define
\begin{equation} \norm{g}_{L_{s}^{p,\infty}(X_{d,n}; w)} := \sup_{A \in \Gr(d,n)} \norm{g(\cdot,A)}_{L_s^p(\T^n; w(\cdot,A))}
\end{equation}
In the above definition, one can replace $\Gr(d,n)$ by any countable set $Y$ (cf. lemma \ref{lem:banach}).

If $p,l = 2$, then the norm is generated by the inner product
\begin{equation}\label{eq:innerprod}(h,g)_{L_s^{2,2}(X_{d,n};w)} := \sum_{A \in \Gr(d,n)} (F_{w(\cdot,A)}h,F_{w(\cdot,A)}g)_{H^s(\T^n)}\end{equation} which makes $L_s^{2,2}(X_{d,n};w)$ a Hilbert space. We prove that the spaces $L_s^{p,l}(X_{d,n};w)$ are Banach spaces when $p \in [1,\infty]$ in lemma \ref{lem:banach}. We emphasize that a weight does not have to have uniform coefficients for its at most of polynomial decay with respect to $\Gr(d,n)$.

There is a connection to the norms used in \cite{IKR19}. Let $w$ be any weight such that $\sum_{A \in \Gr(1,2)} w(0,A)^2 = 1$, and $w(k,A) \equiv 1$ if $k\neq 0$. Now the results in \cite{IKR19} follow from the results of this article using the norm $L_{s}^{2,2}(X_{1,2}; w)$ as the image side spaces in \cite{IKR19} are contained in $L_{s}^{2,2}(X_{1,2}; w)$. 

Yet another norm was used for the stability estimates in \cite{I15}. In the cases $d = n-1$ and $l = \infty$, our analysis of $R_d$ would not require weights, and can be performed similarly to \cite{I15,IKR19}. The analysis of $R_d|_{L_s^p(\T^n)}$ has not been done before if $p \neq 2$. The Bessel potential norms on the domain side are used to understand better the mapping properties of $R_{d}$.

We state and prove the following lemma for the sake of completeness. We remark that without the decay condition on weights these weighted spaces would not be complete.

\begin{lemma}\label{lem:banach} Let $Y$ be a countable set. Let $w: \Z^n \times Y \to (0,\infty)$ be a weight that is at most of polynomial decay for any fixed $y \in Y$. Suppose that $s \in \R, p \in [1,\infty], l \in [1,\infty]$, and $n \geq 1$. Then $L_{s}^{p,l}(\T^n \times Y; w)$ is a Banach space. In particular, $L_{s}^{2,2}(\T^n \times Y; w)$ is a Hilbert space.
\end{lemma}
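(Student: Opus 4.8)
The plan is to factor the statement into two independent and essentially standard facts: first, that each \emph{fiber} $B_y := L_s^p(\T^n; w(\cdot,y))$ is a Banach space in its own right; and second, that the $\ell^l$-direct sum of a countable family of Banach spaces is complete. Indeed, unravelling the definition, an element $g$ of $L_{s}^{p,l}(\T^n \times Y; w)$ is precisely a family $(g(\cdot,y))_{y \in Y}$ with $g(\cdot,y) \in B_y$, and the norm is the $\ell^l(Y)$-norm of the scalar sequence $(\norm{g(\cdot,y)}_{B_y})_{y \in Y}$. Thus $L_{s}^{p,l}(\T^n \times Y; w)$ is, by construction, the $\ell^l$-sum $\bigl(\bigoplus_{y \in Y} B_y\bigr)_{\ell^l}$, and it suffices to treat the two facts above.

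For the first fact I would exhibit an explicit isometric isomorphism $B_y \cong L^p(\T^n)$. Writing $\omega := w(\cdot,y)$ and $T_\omega := (1-\Delta)^{s/2} F_\omega$, the very definition of the weighted Bessel norm gives $\norm{h}_{B_y} = \norm{T_\omega h}_{L^p(\T^n)}$, so $T_\omega$ is a linear isometry of $B_y$ into $L^p(\T^n)$. Since $T_\omega$ is a composition of Fourier multipliers with symbols $\vev{k}^s$ and $\omega(k)$, its formal inverse is the Fourier multiplier $F_{\omega^{-1}}(1-\Delta)^{-s/2}$ with symbol $\omega(k)^{-1}\vev{k}^{-s}$; applying it to any $u \in L^p(\T^n)$ produces $h$ with $T_\omega h = u$ and $\norm{h}_{B_y} = \norm{u}_{L^p(\T^n)}$, so $T_\omega$ is onto. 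Completeness of $B_y$ then transfers from that of $L^p(\T^n)$.

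The decisive point—and the one the preceding remark flags—is that this inverse multiplier must genuinely map $\mathcal{T}'$ into $\mathcal{T}'$, so that $B_y$ really consists of distributions and the isometry is onto $L^p$. This is exactly where the at-most-polynomial-decay hypothesis $\omega(k) \geq C \vev{k}^{-N}$ enters: it forces $\omega(k)^{-1} \leq C^{-1}\vev{k}^{N}$, so the symbol $\omega(k)^{-1}\vev{k}^{-s}$ has at most polynomial growth and hence defines a continuous multiplier on the periodic distributions. Without a lower bound of polynomial type on $\omega$ the inverse symbol could grow super-polynomially, a Cauchy sequence in $B_y$ could converge to an object whose Fourier coefficients fail to be tempered, and completeness would break down; I would record this as the single substantive obstacle, the rest being bookkeeping.

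Finally, for the $\ell^l$-sum I would run the usual diagonal argument: given a Cauchy sequence $(g_n)_n$ in $L_{s}^{p,l}(\T^n \times Y; w)$, each coordinate $(g_n(\cdot,y))_n$ is Cauchy in the Banach space $B_y$ and converges to some $g(\cdot,y) \in B_y$; one then checks that the resulting family $g$ lies in the sum and that $g_n \to g$ in norm, splitting into the cases $l \in [1,\infty)$ (controlling finite partial sums $\sum_{y \in F}\norm{\cdot}_{B_y}^l$ over finite $F \subset Y$ and passing to the limit) and $l = \infty$ (a uniform-in-$y$ bound). The countability of $Y$ is used only to make these coordinatewise limits unambiguous. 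For the special case $p = l = 2$ the norm is induced by the inner product \eqref{eq:innerprod}, so the completeness just established upgrades $L_{s}^{2,2}(\T^n \times Y; w)$ to a Hilbert space.
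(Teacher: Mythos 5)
Your proposal is correct and follows essentially the same route as the paper: both reduce the claim to completeness of each fiber $L_s^p(\T^n;w(\cdot,y))$ plus the standard completeness of a countable $\ell^l$-sum of Banach spaces, and both establish the fiber statement by transporting a Cauchy sequence through the multiplier $(1-\Delta)^{s/2}F_{w(\cdot,y)}$ to $L^p(\T^n)$ and pulling the limit back, with the at-most-polynomial-decay hypothesis used in exactly the place you identify, namely to guarantee that the pulled-back limit has polynomially bounded Fourier coefficients and hence lies in $\mathcal{T}'$. Your packaging of the fiber argument as a surjective isometry onto $L^p(\T^n)$ is a slightly cleaner phrasing of the same computation, not a different proof.
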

\begin{proof} Suppose that $1\leq l < \infty$. (If $l = \infty$, the proof is similar.) We first show that $L_{s}^{p,l}(\T^n \times Y; w)$ is a vector space. Let $c \in \C$ and $f,g \in L_{s}^{p,l}(\T^n \times Y; w)$. We have trivially that
\begin{equation}\norm{cf}_{L_{s}^{p,l}(\T^n \times Y; w)}^l = \abs{c}^l\sum_{y \in Y} \norm{f(\cdot,y)}_{L_s^p(\T^n;w)}^l.\end{equation} The Minkowski and triangle inequalities imply
\begin{equation}\begin{split}\norm{f+g}_{L_{s}^{p,l}(\T^n \times Y; w)} &= \left(\sum_{y \in Y} \norm{F_{w(\cdot,y)}f(\cdot,y)+F_{w(\cdot,y)}g(\cdot,y)}_{L_s^p(\T^n)}^l\right)^{1/l} \\
&\leq \norm{f}_{L_{s}^{p,l}(\T^n \times Y; w)} + \norm{g}_{L_{s}^{p,l}(\T^n \times Y; w)}.
\end{split}\end{equation}
This shows that $L_{s}^{p,l}(\T^n \times Y; w)$ is a vector subspace of all collections of distributions $\{f(\cdot,y)\}_{y \in Y}$ with $f(\cdot,y) \in \mathcal{T}'$.

We show next that $L_{s}^{p,l}(\T^n \times Y; w)$ is a complete space. Let $f_i \in L_{s}^{p,l}(\T^n \times Y; w)$ be a Cauchy sequence. It follows from the definition of the norm in $L_{s}^{p,l}(\T^n \times Y; w)$ that $f_i(\cdot,y) \in L_s^p(\T^n; w(\cdot,y))$ is a Cauchy sequence for any $y \in Y$. Suppose that each $L_s^p(\T^n; w(\cdot,y))$ is complete. It follows that $f_i(\cdot,y) \to f_y \in L_s^p(\T^n; w(\cdot,y))$ as $i \to \infty$. This implies that there exists a limit of $f_i$ in $L_{s}^{p,l}(\T^n \times Y; w)$ by standard arguments.

Let us prove that $L_s^p(\T^n; w(\cdot,y))$ is complete for any $y \in Y$. Take a Cauchy sequence $f_i \in L_s^p(\T^n; w(\cdot,y))$. Now it follows that the distributions
\begin{equation}g_i = (1-\Delta)^{s/2}F_{w(\cdot,y)}f\end{equation} are in $L^p(\T^n)$ and form a Cauchy sequence. Therefore $\lim_{i \to \infty} g_i =: g$ exists. We claim that the distribution defined on the Fourier side as $\hat{f}(k) := \frac{\hat{g}(k)}{\vev{k}^s w(k,y)}$ is the limit of $f_i$ in $L_s^p(\T^n; w(\cdot,y))$. 

We need to show two things, that $f \in \mathcal{T}'$ and $\norm{f_i-f}_{L_s^p(\T^n; w(\cdot,y))} \to 0$ as $i \to \infty$. We first notice that $(1-\Delta)^{s/2}F_{w(\cdot,y)}f = g$ belongs to $L^p(\T^n)$. We can now calculate that \begin{equation}\begin{split}
\norm{f_i-f}_{L_s^p(\T^n; w(\cdot,y))} &= \norm{(1-\Delta)^{s/2}F_{w(\cdot,y)}(f_i-f)}_{L^p(\T^n)} \\
&=\norm{g_i -g}_{L^p(\T^n)}
\end{split}
\end{equation}
for any $i \in \N$. Therefore, $\norm{f_i-f}_{L_s^p(\T^n; w(\cdot,x))} \to 0$ as $i \to \infty$. 

It is enough that the Fourier coefficients of $f$ have polynomial growth by the structure theorem of periodic distributions \cite[Chapter 3.2.3]{ST87}. We have $\abs{\hat{g}(k)} \leq C_1 \vev{k}^\alpha$ for some $\alpha, C_1 > 0$ since $g \in L^p(\T^n) \subset \mathcal{T}'$. On the other hand, we assumed that $w(k,y) \geq C_2\vev{k}^{-N}$ for some $C_2, N > 0$. Hence, we obtain that
\begin{equation}\abs{\hat{f}(k)} = \abs{\frac{\hat{g}(k)}{\vev{k}^s w(k,y)}} \leq (C_1/C_2)\vev{k}^{\alpha+N-s}.
\end{equation}
This shows that $f \in \mathcal{T}'$.
\end{proof}

\begin{remark} One uses the fact that weights have at most of polynomial decay only to show that the limits of Cauchy sequences are in $\mathcal{T}'$. One could also allow more rapid decay for weights but in that case, the objects of the completion would not be distributions but ultra-distributions \cite{ST87}. In the analysis of $R_d$, such generality seems to be unnecessary and our assumptions avoid this.
\end{remark}

\subsection{On constructions of weights}\label{sec:constructions}

In this section, we discuss how to construct weights that satisfy the assumptions of our theorems. The weights of this paper are of the form $w: \Z^n \times \Gr(d,n) \to (0,\infty)$ with the following properties.
\begin{enumerate}[label=(\roman*)]
\item For any $A \in \Gr(d,n)$ there exists $C, N > 0$ such that $w(k,A) \geq C\vev{k}^{-N}$ for every $k \in \Z^n$. \label{item:poldec}
\item There exists $C > 0$ such that $W_k \leq C$ for every $k \in \Z^n$ where $W_k = \sum_{A \in \Omega_k} w(k,A)^2$ and $\Omega_k = \{\, A \in \Gr(d,n) \,;\, k \bot A\,\}$.\label{item:upbdd}
\item There exists $c > 0$ such that $c \leq W_k$ for every $k \in \Z^n$. \label{item:lvbdd}
\end{enumerate}

The property \ref{item:poldec} is assumed for any weight in this article to guarantee that $L_s^{p,l}(X_{d,n};w)$ are Banach spaces. The property \ref{item:upbdd} is assumed for most of the weights to guarantee that $R_d: L_s^p(\T^n) \to L_s^{p,l}(X_{d,n};w)$ is continuous (with some restrictions if $p,l \neq 2$). The property \ref{item:lvbdd} is additionally assumed to prove the stability estimates and the theorems on regularization.

First of all, it is very easy to construct weights that satisfy \ref{item:poldec} alone. It is not hard to construct weights that satisfy \ref{item:poldec} and \ref{item:upbdd}. Since the set $\Gr(d,n)$ is countable, we may write it with an enumeration $\phi: \Gr(d,n) \to \N$. For example, we construct a weight $w(k,A) =  2^{-\phi(A)}\vev{k}^{-N}$ with large enough $N > 0$ chosen such that $\sum_{k \in \Z^n} \vev{k}^{-2N} < \infty$. Then $\sum_{A\in \Gr(d,n)}\sum_{k \in \Z^n} w(k,A)^2 < C$ for some $C > 0$. This shows that both conditions \ref{item:poldec} and \ref{item:upbdd} hold.

We give next a nontrivial example of a weight satisfying \ref{item:upbdd} and \ref{item:lvbdd} but not (necessarily) \ref{item:poldec}. Let $\phi_k: \Omega_k \to \N$ be an enumeration. Let $Q := \{\,(k,A) \in \Z^n \times \Gr(d,n)\,;\, A \in \Omega_k\,\}$. For any $(k,A) \in Q$, we define the weight $w(k,A) := \frac{h(k)}{\phi_k(A)^{1/2 +\epsilon}}$ with some mapping $h: \Z^n \to (a,b)$ with $0<a\leq b <\infty$ and $\epsilon > 0$. If $(k,A) \notin Q$, we set $w(k,A) = 1$. One has that $\abs{\Omega_k} = \infty$ if $1\leq d < n-1$ or $k = 0$, and $\abs{\Omega_k} = 1$ if $d = n-1$ and $k \neq 0$. Now \begin{equation}\sum_{A\in \Omega_k} w(k,A)^2 = h^2(k)\sum_{i=1}^{\abs{\Omega_k}} i^{-1-2\epsilon}.\end{equation} Hence, we get that $a^2 \leq W_k \leq Cb^2$ where $C= \sum_{i=1}^\infty i^{-1-2\epsilon}$.

The problem gets more difficult if the all three conditions must be satisfied at the same time. We solve this problem now by combining ideas from the both constructions above. We make a proposition about a concrete example, and more general methods are summarized in remarks \ref{rm:joku} and \ref{rm:joku2}.

\begin{proposition}\label{prop:constructions} Let $\phi_k: \Omega_k \to \N$ be an enumeration for any $k \in \Z^n$, and let $\phi: \Gr(d,n) \to \N$ be an enumeration. Let $h: \Z^k \to (a,b)$ with $0<a\leq b < \infty$ and $g(k) = \vev{k}^{-N}$ for some $N \geq 0$. Then the weight
\begin{equation} \label{eq:goodweight}w(k,A) := \begin{cases} \frac{h(k)}{\phi_k(A)} + \frac{g(k)}{\phi(A)} \quad & (k,A) \in Q \\
1 \quad & (k,A) \in Q^c\end{cases}
\end{equation}
satisfies the properties \ref{item:poldec}, \ref{item:upbdd} and \ref{item:lvbdd}.
\end{proposition}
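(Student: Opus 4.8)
The plan is to check the three defining properties \ref{item:poldec}, \ref{item:upbdd} and \ref{item:lvbdd} separately, using the key observation that the two summands in \eqref{eq:goodweight} are engineered for different jobs. The term $g(k)/\phi(A) = \vev{k}^{-N}/\phi(A)$ will carry the polynomial decay lower bound \ref{item:poldec}, the term $h(k)/\phi_k(A)$ will carry the uniform lower bound \ref{item:lvbdd} on $W_k$, and both terms will be square-summable over $\Omega_k$ to give the uniform upper bound \ref{item:upbdd}. Throughout I would use that every $\Omega_k$ is nonempty (for $k \neq 0$ one picks a rational $d$-plane inside the $(n-1)$-dimensional space $k^\bot$, which exists since $d \leq n-1$; for $k=0$ one has $\Omega_0 = \Gr(d,n)$) and that the enumerations take the value $1$.

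The two lower bounds are the easy ones, since in a sum of positive terms I may keep just one summand. For \ref{item:poldec} I fix $A$ and split on whether $(k,A) \in Q$. If $(k,A) \in Q^c$ then $w(k,A)=1 \geq \phi(A)^{-1}\vev{k}^{-N}$ because $\phi(A) \geq 1$, $\vev{k}\geq 1$ and $N\geq 0$; if $(k,A)\in Q$ then dropping the first summand gives $w(k,A) \geq g(k)/\phi(A) = \phi(A)^{-1}\vev{k}^{-N}$. Hence $w(k,A)\geq \phi(A)^{-1}\vev{k}^{-N}$ for all $k$, which is \ref{item:poldec} with the (allowed) $A$-dependent constant $C_A = \phi(A)^{-1}$, and with decay exponent $\max(N,1)>0$ in case $N=0$. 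For \ref{item:lvbdd} I take $A_0 := \phi_k^{-1}(1) \in \Omega_k$ and keep only this term and only its first summand,
\begin{equation}
W_k \geq w(k,A_0)^2 \geq \left(\frac{h(k)}{\phi_k(A_0)}\right)^2 = h(k)^2 \geq a^2 > 0,
\end{equation}
so \ref{item:lvbdd} holds with $c = a^2$.

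The upper bound \ref{item:upbdd} is where a little care is needed. Every $A$ contributing to $W_k$ lies in $\Omega_k$, so $(k,A)\in Q$ and the two-term formula applies. Using $(x+y)^2 \leq 2(x^2+y^2)$ together with $h(k)^2 \leq b^2$ and $g(k)^2 = \vev{k}^{-2N}\leq 1$, I would obtain
\begin{equation}
W_k \leq 2b^2\sum_{A \in \Omega_k}\frac{1}{\phi_k(A)^2} + 2\sum_{A \in \Omega_k}\frac{1}{\phi(A)^2}.
\end{equation}
Since $\phi_k$ maps $\Omega_k$ injectively into $\N$, the first series is bounded by $\sum_{i=1}^\infty i^{-2} = \pi^2/6$, and since $\phi$ maps $\Gr(d,n)\supseteq\Omega_k$ injectively into $\N$, the second is bounded the same way; hence $W_k \leq \tfrac{\pi^2}{3}(b^2+1)$ uniformly in $k$.

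The whole verification is elementary once the division of labor between the two summands is recognized, so I do not expect a serious technical obstacle. The genuine point — and the reason the construction is nontrivial — is the \emph{simultaneous} compatibility of \ref{item:poldec}, \ref{item:upbdd} and \ref{item:lvbdd}, which the earlier single-term attempts in this section could not achieve. Concretely, the only step that could fail is the uniform bound \ref{item:upbdd}: it relies essentially on the exponent $2$ in the denominators making both $\sum \phi_k(A)^{-2}$ and $\sum\phi(A)^{-2}$ convergent independently of $k$, combined with the boundedness of $h$ and of $g$, so I would take care to present that estimate in full while treating the two lower bounds as short one-term arguments.
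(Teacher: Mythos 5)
Your proof is correct and follows essentially the same route as the paper's: the lower bound \ref{item:lvbdd} from the $h(k)/\phi_k(A)$ term alone, the decay \ref{item:poldec} from the $g(k)/\phi(A)$ term (or the constant $1$ off $Q$), and the upper bound \ref{item:upbdd} via $(x+y)^2\leq 2(x^2+y^2)$ and the convergence of $\sum i^{-2}$. Your handling of the second series via injectivity of $\phi$ on $\Omega_k\subseteq\Gr(d,n)$, and your remark on replacing the exponent by $\max(N,1)$ when $N=0$, are in fact slightly more careful than the paper's write-up.
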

\begin{proof} Using the definition (\ref{eq:goodweight}) and the positivity of the involved functions, we have that \begin{equation}
W_k \geq h^2(k)\sum_{A \in \Omega_k} \phi_k(A)^{-2} = h^2(k)\sum_{i=1}^{\abs{\Omega_k}} i^{-2} \geq a^2.\end{equation} This shows \ref{item:lvbdd}.

Suppose that $(k,A) \in Q$. We use \begin{equation}\label{eq:propA}\frac{1}{2}w(k,A)^2 \leq \frac{h^2(k)}{\phi_k(A)^{2}}+\frac{g^2(k)}{\phi(A)^{2}}\end{equation} to estimate $W_k$ from above. The formula (\ref{eq:propA}) gives
\begin{equation} \frac{1}{2}W_k \leq \sum_{A \in \Omega_k}\left(\frac{h^2(k)}{\phi_k(A)^{2}}+\frac{g^2(k)}{\phi(A)^{2}}\right) \leq h^2(k)\sum_{i=1}^{\abs{\Omega_k}} i^{-2} + \vev{k}^{-2N}\sum_{i=1}^{\abs{\Omega_k}} i^{-2}.
\end{equation} Since $\vev{k}^{-2N} \leq 1$ and $h(k) \leq b$ for any $k \in \Z^n$, we obtain that $W_k \leq 2C(1+b^2)$ where $C = \sum_{i=1}^{\infty} i^{-2} < \infty$. This shows \ref{item:upbdd}.

Using the definition (\ref{eq:goodweight}) and the positivity of the involved functions, we can directly estimate that 
\begin{equation}\abs{w(k,A)} \geq \min\{1,\frac{1}{\phi(A)}\vev{k}^{-N}\} =\frac{1}{\phi(A)}\vev{k}^{-N}.\end{equation}
This shows that $w(\cdot,A)$ is at most of polynomial decay \ref{item:poldec}.
\end{proof}

\begin{remark}\label{rm:joku} Proposition \ref{prop:constructions} generalizes for $w(k,A)|_Q = h(k)\psi(k,A) + g(k)\omega(A)$ with the conditions that $h(k)$ is bounded from above and below, $g(k)$ has at most of polynomial decay and is bounded above, the sums of $\omega(A)^2$ over $\Omega_k$ are uniformly bounded from above, and the sums of $\psi(k,A)^2$ over $\Omega_k$ are uniformly bounded from below and above.
\end{remark}

\begin{remark}\label{rm:joku2} If a weight $w$ satisfies the conditions \ref{item:poldec} and \ref{item:upbdd}, then it can be normalized as $\tilde{w}(k,A) := \frac{w(k,A)}{\sqrt{W_k}}$. The normalized weight $\tilde{w}$ has the property that $\tilde{W}_k = 1$ for any $k \in \Z^n$. Moreover, since $w(k,A)$ is at most of polynomial decay and $\sqrt{W_k} \leq C$ for some $C>0$, it follows that $\tilde{w}$ is at most of polynomial decay.
\end{remark}

We can construct weights that satisfty the assumptions of theorem \ref{thm:recwithoutFourier} by defining $w(k,A) = 2^{-\phi_k(A)}$ for any $(k,A) \in Q$ and $w(k,A) = 1$ if $(k,A) \notin Q$. If $d < n-1$, then $\sum_{A \in \Omega_k} w(k,A) = 1$ for any $k \in \Z^n$, and the series $\sum_{A \in \Omega_k} w(k,A)$ are absolutely convergent.

\subsection{Basic properties of periodic Radon transforms}

In this section, we state and prove some basic properties of $R_d$. Some of these properties were used earlier in the special cases in \cite{I15,IKR19}. We have chosen to include most of the proofs here for completeness.

\subsubsection{Periodic Radon transforms for integrable functions}

Let $T = (t_1,\dots,t_d) \in \R^d$ and $A = \{v_1,\dots,v_d\} \in G_d^n$. We can define $R_df(\cdot,A)$ for $L^1(\T^n)$ functions simply as
\begin{equation}\label{eq:aeL1def}R_{d,A}f(x) := \int_{[0,1]^d} f(x+t_1v_1+\cdots+t_dv_d)dt_1\cdots dt_d\end{equation} where the formula is defined for a.e. $x \in \T^n$. We lighten our notation by denoting the corresponding linear combinations by $T\cdot A = t_1v_1+\cdots+t_dv_d$ with respect to the enumeration of $A$. The following basic properties are valid.

\begin{lemma}\label{lem:L1lemma} Suppose that $f \in L^1(\T^n)$ and $A \in G_d^n$. Then $R_{d,A}f$ can be defined by the formula (\ref{eq:aeL1def}) for a.e. $x \in \T^n$. Moreover,
\begin{enumerate}[label=(\roman*)]
\item this definition coincides with the distributional definition: for every $f \in L^1(\T^n)$ and $g \in L^\infty(\T^n)$ it holds that $(R_{d,A} f,g) = (f,R_{d,A} g)$; \label{item:L1prop1}
\item $R_{d,A}: L^p(\T^n) \to L^p(\T^n)$ is $1$-Lipschitz for any $p \in [1,\infty]$.\label{item:L1prop2}
\item Suppose that $f \in \mathcal{T}'$, $A \in G_d^n$ and $R_df(\cdot,A) \in L^1(\T^n)$. Then $R_{d,A}f(x+S\cdot A) = R_{d,A}f(x)$ for a.e. $x\in \T^n$ and every $S \in \R^d$.\label{item:L1prop3}
\end{enumerate}
\end{lemma}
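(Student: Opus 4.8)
The plan is to handle the a.e. well-definedness first and then the three numbered claims, all of which rest on a single periodicity observation: because each $v_i \in \Z^n$, the map $T = (t_1,\dots,t_d) \mapsto f(x + T\cdot A)$ is $1$-periodic in each variable $t_i$, so integrating over $[0,1]^d$ captures exactly one full period along the plane directions. To see that (\ref{eq:aeL1def}) makes sense for a.e. $x$, I would apply Tonelli's theorem to $(x,T) \mapsto \abs{f(x + T\cdot A)}$ on $\T^n \times [0,1]^d$; translation invariance of the Haar measure on $\T^n$ gives $\int_{\T^n}\int_{[0,1]^d} \abs{f(x+T\cdot A)}\,dT\,dx = \norm{f}_{L^1(\T^n)}$, so the inner integral is finite for a.e. $x$ and defines an element of $L^1(\T^n)$.

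For part \ref{item:L1prop1}, I would expand $(R_{d,A}f, g)$ as $\int_{\T^n} R_{d,A}f(x) g(x)\,dx$, use Fubini (legitimate since $f \in L^1$ and $g \in L^\infty$ make the integrand absolutely integrable on $\T^n \times [0,1]^d$) to move the $T$-integration outside, and then substitute $y = x + T\cdot A$ in the $\T^n$-integral. This produces $\int_{[0,1]^d}\int_{\T^n} f(y) g(y - T\cdot A)\,dy\,dT$, and a second application of Fubini isolates the inner integral $\int_{[0,1]^d} g(y - T\cdot A)\,dT$. The periodicity observation gives $\int_{[0,1]^d} g(y - T\cdot A)\,dT = \int_{[0,1]^d} g(y + T\cdot A)\,dT = R_{d,A}g(y)$, whence $(R_{d,A}f, g) = (f, R_{d,A}g)$; in particular the a.e. formula and the distributional definition agree.

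For part \ref{item:L1prop2}, the case $p = \infty$ is the pointwise bound $\abs{R_{d,A}f(x)} \le \int_{[0,1]^d}\abs{f(x+T\cdot A)}\,dT \le \norm{f}_{L^\infty}$, while for $1 \le p < \infty$ I would apply Minkowski's integral inequality together with translation invariance of the $L^p(\T^n)$ norm to get $\norm{R_{d,A}f}_{L^p} \le \int_{[0,1]^d}\norm{f(\cdot + T\cdot A)}_{L^p}\,dT = \norm{f}_{L^p}$. Linearity of $R_{d,A}$ then upgrades this norm bound to the $1$-Lipschitz estimate.

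Finally, for part \ref{item:L1prop3}, I would first note that for any test function $g$ the same periodicity argument gives $R_{d,A}(\tau_{S\cdot A}g) = R_{d,A}g$, where $\tau_h g(y) = g(y-h)$. Using the distributional definition $(R_{d,A}f, g) = (f, R_{d,A}g)$, this yields $(R_{d,A}f, \tau_{S\cdot A}g) = (f, R_{d,A}g) = (R_{d,A}f, g)$ for every $g$, i.e. the distribution $R_{d,A}f$ is invariant under translation by $S\cdot A$. Since $R_{d,A}f \in L^1(\T^n)$ by hypothesis, this distributional invariance is equivalent to $R_{d,A}f(x + S\cdot A) = R_{d,A}f(x)$ for a.e. $x$, for each fixed $S \in \R^d$. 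The one recurring subtlety, and the step I would be most careful about, is justifying the interchange of the translate, the integral over $[0,1]^d$, and the distributional pairing; in every case it reduces to the $\Z^n$-periodicity of $T \mapsto f(x + T\cdot A)$, which is precisely why the integer-vector hypothesis on $A$ is essential.
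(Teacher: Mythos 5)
Your proof is correct, but it takes a genuinely different route from the paper's. The paper proves this lemma by first constructing the adapted parallelepiped coordinates $\phi_A$ of \eqref{eq:coords} and the change-of-variables identity \eqref{eq:intsubs}: a.e.\ well-definedness and integrability come from Fubini in those coordinates, measurability of $R_{d,A}f$ is established by a separate level-set argument using that $\phi_A$ is a smooth change of coordinates, and part \ref{item:L1prop2} follows from \eqref{eq:intsubs}, the shift identity \eqref{eq:shiftHyper}, and H\"older's inequality; parts \ref{item:L1prop1} and \ref{item:L1prop3} are declared to follow easily from the definitions and their proofs are omitted. You instead work directly on the product $\T^n\times[0,1]^d$, getting well-definedness from Tonelli and translation invariance of the Haar measure, part \ref{item:L1prop1} from Fubini, the substitution $y=x+T\cdot A$, and the $\Z^n$-periodicity of $T\mapsto g(y-T\cdot A)$, part \ref{item:L1prop2} from Minkowski's integral inequality, and part \ref{item:L1prop3} from a clean distributional translation-invariance argument. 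Your approach is more elementary and self-contained, avoids choosing the complementary coordinate directions $e_{1_A},\dots,e_{q_A}$, and actually supplies the omitted proofs of \ref{item:L1prop1} and \ref{item:L1prop3}; the one point you pass over silently is the joint measurability of $(x,T)\mapsto f(x+T\cdot A)$ needed for Tonelli, which is standard (Borel modification plus the fact that preimages of null sets under $(x,T)\mapsto x+T\cdot A$ are null, by Fubini) but deserves a sentence, and is precisely the issue the paper's level-set argument in the $\phi_A$-coordinates is designed to handle. What the paper's heavier setup buys is reuse: the coordinates $\phi_A$ and the identity \eqref{eq:intsubs} are not dispensable scaffolding but are needed again for the explicit inversion formula of proposition \ref{prop:L1rec}, so proving the lemma in those coordinates amortizes the construction.
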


We postpone the proof of lemma \ref{lem:L1lemma} for a while. We remark that lemma \ref{lem:L1lemma} is a simple generalization of \cite[Lemma 7]{IKR19}, which was stated in \cite{IKR19} without a proof. We need to first introduce some useful notations. 

Let $q = n-d$ and $V$ be the linear subspace of $\R^n$ spanned by $A$. Now there exist distinct unit vectors $e_{1_A},\dots,e_{q_A} \in \R^n$ along the positive coordinate axes, $\{e_1,\dots,e_n\}$, such that $e_{i_A} \notin V$ and $E_A := \{v_1,\dots,v_d,e_{1_A},\dots,e_{q_A}\}$ spans $\R^n$. We define $\phi_A: [0,1]^n \to \R^n$ by the formula 
\begin{equation}\label{eq:coords}\phi_A(t_1,\dots,t_q,s_1,\dots,s_d) = t_1e_{1_A}+\cdots+t_q e_{q_A} + s_1v_1+\cdots + s_dv_d.\end{equation}
We may write $T = (t_1,\dots,t_q)$, $S = (s_1,\dots,s_d)$ and $dx = dSdT = dTdS$ to shorten notation.

\begin{remark} These coordinates are not unique, but we suppose that we have fixed some $e_{1_A},\dots,e_{q_A}$ for every $A \in G_d^n$. The specific choice is not important in our method.
\end{remark}

Next we discuss some elementary properties of the coordinates $\phi_A$. The image of $\phi_A$ is an $n$-parallelepiped when interpreted in $\R^n$. A simple calculation shows that $\abs{\det(D\phi_A)} = \abs{\det(v_1,\dots,v_n,e_{1_A},\dots,e_{q_A})} \in \Z_+$, which is also equal to the volume of the $n$-parallelepiped spanned by $E_A$. The corners of the parallelepiped, $\phi_A(T,S)$ with $T \in \{0,1\}^q, S \in \{0,1\}^d$, have integer coordinates as well. It can be argued that the coordinates (\ref{eq:coords}) wrap around the torus $\abs{\det(D\phi_A)}$ times when projected into $\T^n$, i.e.~$\abs{\det(D\phi_A)} = \abs{\phi_A^{-1}(x)}$ for any $x \in \T^n$.

Let us denote the Lebesgue measure on $\T^n$ by $dm$ and on $[0,1]^n$ by $dx$. We thus have the change of coordinates formula for integrals of measurable functions in the form of
\begin{equation}\label{eq:intsubs} \begin{split} \int_{\T^n} fdm &= \frac{1}{\abs{\det(D\phi_A)}}\int_{[0,1]^n} f \circ \phi_A \abs{\det(D\phi_A)} dx \\
&= \int_{[0,1]^n} f \circ \phi_A dx.\end{split}
\end{equation} 
The formula (\ref{eq:intsubs}), in a slightly different form, was used in the proofs given in \cite{IKR19}. The connection to \cite{IKR19} is explained with more details in remark \ref{rmk:example}.

\begin{remark}\label{rmk:example} Let $n = 2, d = 1$, $v =(v^1,v^2) \in \Z^2 \setminus \{0\}$ and $A = \{v\}$. Suppose that $v$ is not parallel to $e_1$, which in turn implies that $v^2 \neq 0$. If we choose $E_A = \{e_1\}$, then the formula $\abs{\det(D\phi_A)} = \abs{v^2}$ holds and it is easy to check that the coordinates wrap $\abs{v^2}$ times around $\T^2$. If $v$ is parallel to $e_1$, then one chooses $E_A = \{e_2\}$ instead of $e_1$. This is in-line with the formulas derived in \cite{IKR19} but there the coordinates were scaled so that they wrap around $\T^2$ exactly once.
\end{remark}

Now we are ready to prove lemma \ref{lem:L1lemma}.

\begin{proof}[Proof of lemma \ref{lem:L1lemma}] The properties \ref{item:L1prop1} and \ref{item:L1prop3} follow easily from the definitions, and the proofs are thus omitted. 

We show first that the mapping $R_{d,A}$ is well defined by the formula (\ref{eq:aeL1def}). Let $\tilde{0} = (0,\dots,0) \in \R^d$. We get from Fubini's theorem and the formula (\ref{eq:intsubs}) that
\begin{equation}\int_{\T^n} f dm = \int_{[0,1]^q} R_{d,A}f(\phi_A(T,\tilde{0})) dT\end{equation} and $R_{d,A}f(\phi_A(T,\tilde{0})) \in L^1([0,1]^q)$. It follows from the definition (\ref{eq:aeL1def}) of $R_{d,A}f$ that \begin{equation}\label{eq:shiftHyper}R_{d,A}f(\phi_A(T,\tilde{0})) = R_{d,A}f(\phi_A(T,S))\end{equation} for all $S \in \R^d$.

We show that $R_{d,A}f$ is a measurable function. Suppose for simplicity that $f$ is real valued. Let $\alpha > 0$ and define the sets
\begin{equation}X_\alpha = \{\,T \in [0,1]^q \,;\, R_{d,A}f(\phi_A(T,\tilde{0})) > \alpha\,\}.\end{equation} We have already proved that the set $X_\alpha$ is measurable for any $\alpha >0$. Now we get from the formula (\ref{eq:shiftHyper}) that \begin{equation}\{\,p \in [0,1]^n \,;\, R_{d,A}f(\phi_A(p)) > \alpha \,\} = X_\alpha \times [0,1]^d.\end{equation} The set $X_\alpha \times [0,1]^d$ is measurable as a product of measurable sets. Since $\phi_A$ is a smooth change of coordinates, we first find that $\phi_A(X_\alpha \times [0,1]^d)$ is measurable, and thus $R_{d,A}f$ is measurable. If $f$ is complex valued, then the above argument can be done separately for the real and imaginary parts as $R_{d,A}$ is linear.

Now we are ready to prove the property \ref{item:L1prop2}. Suppose that $f \in L^p(\T^n)$ and $p \in [1,\infty)$. The formulas (\ref{eq:intsubs}) and (\ref{eq:shiftHyper}), and Hölder's inequality give
\begin{equation}\begin{split}\int_{\T^n} \abs{R_{d,A}f}^p dm &= \int_{[0,1]^q}\int_{[0,1]^d} \abs{R_{d,A}f \circ \phi_A}^p dx \\ &= \int_{[0,1]^q} \abs{(R_{d,A}f)(\phi_A(T,\tilde{0}))}^pdT \\
&\leq \int_{[0,1]^q} (R_{d,A}\abs{f}^p)(\phi_A(T,\tilde{0}))dT  \\&= \norm{f}_{L^p(\T^n)}^p < \infty.\end{split}\end{equation} Hence Tonelli's theorem implies that $R_{d,A}f \in L^p(\T^n)$. If $p = \infty$, then trivially $\norm{R_{d,A}f}_{L^\infty(\T^n)} \leq \norm{f}_{L^\infty(\T^n)}$.\end{proof}

\subsubsection{Mapping properties of periodic Radon transforms}

We first recall the inversion formula in \cite{I15}. If one writes the formula \cite[Eq. (2)]{I15} in terms of the periodic subspaces, it gives the following theorem.

\begin{theorem}[Eq. (2) in \cite{I15}]\label{thm:ilmavirta} Let $f \in \mathcal{T}'$, $k \in \Z^n$ and $A \in \Gr(d,n)$. Then $\widehat{R_df}(k,A) = \hat{f}(k) \delta_{k\bot A}$, where
\begin{equation}
\delta_{k\bot A} = \begin{cases} 1 & \text{if $k \bot A$}\\
0 & \text{otherwise}.\end{cases}
\end{equation}\end{theorem}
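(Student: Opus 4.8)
The plan is to reduce the claim to a single elementary one-dimensional integral by testing $R_df(\cdot,A)$ against exponentials and using the distributional definition of $R_d$. First I would fix a representative $\tilde{A}=\{v_1,\dots,v_d\}\in G_d^n$ spanning $A$, and recall that for a periodic distribution the $k$-th Fourier coefficient is $\hat{f}(k)=(f,e_{-k})$ with $e_{-k}(x):=e^{-2\pi i k\cdot x}\in\mathcal{T}$. By the distributional definition $(R_df(\cdot,A))(g)=(f,R_dg(\cdot,A))$, taking $g=e_{-k}$ gives
\[
\widehat{R_df}(k,A)=(R_df(\cdot,A),e_{-k})=(f,R_de_{-k}(\cdot,A)),
\]
so it suffices to evaluate $R_d$ on the single smooth test function $e_{-k}$.

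Second, I would compute $R_de_{-k}(\cdot,A)$ directly from the integral definition (\ref{eq:contRdef}). The exponential factors out of the integral, leaving
\[
R_de_{-k}(x,A)=e_{-k}(x)\prod_{j=1}^d\int_0^1 e^{-2\pi i t_j(k\cdot v_j)}\,dt_j.
\]
The decisive point is that $k\cdot v_j\in\Z$, since both $k$ and $v_j$ are integer vectors: hence each factor equals $1$ when $k\cdot v_j=0$ and equals $(e^{-2\pi i(k\cdot v_j)}-1)/(-2\pi i(k\cdot v_j))=0$ otherwise. Thus the product is $1$ exactly when $k\cdot v_j=0$ for every $j$ and $0$ otherwise, i.e. it equals $\delta_{k\perp A}$, giving $R_de_{-k}(\cdot,A)=\delta_{k\perp A}\,e_{-k}$.

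Finally, substituting back yields
\[
\widehat{R_df}(k,A)=(f,\delta_{k\perp A}e_{-k})=\delta_{k\perp A}(f,e_{-k})=\delta_{k\perp A}\,\hat{f}(k),
\]
which is the claim. Since $k\cdot v_j=0$ for all $j$ is equivalent to $k$ being orthogonal to the whole subspace spanned by $\tilde{A}$, the answer depends only on the element of $\Gr(d,n)$ and not on the chosen basis; this both makes the Grassmannian-indexed transform well defined and proves $R_df(\cdot,A)=R_df(\cdot,B)$ whenever $A,B$ span the same subspace. I expect no real obstacle here: the argument is driven entirely by duality, so no Fourier-series convergence or interchange of limits is needed, and the only step requiring care is the integrality of $k\cdot v_j$, which is exactly what forces the boundary term $e^{-2\pi i(k\cdot v_j)}-1$ to vanish and collapses each one-dimensional integral to the indicator of orthogonality.
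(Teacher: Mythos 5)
Your proof is correct. Note that the paper does not prove this statement at all: it is imported verbatim as Eq.~(2) of the cited reference \cite{I15}, so there is no in-text argument to compare against. Your derivation is the standard one and is exactly what one would expect behind the citation: reduce to the test functions $e_{-k}$ via the duality definition of $R_d$ on $\mathcal{T}'$, factor the integral over $[0,1]^d$, and use that $\int_0^1 e^{-2\pi i t m}\,dt$ equals $1$ for $m=0$ and $0$ for $m\in\Z\setminus\{0\}$, which is where the integrality of $k\cdot v_j$ enters. A nice bonus of writing it this way is that you simultaneously verify the independence of the result from the chosen basis $\tilde{A}\in G_d^n$, which is precisely the well-definedness of $R_df(\cdot,A)$ for $A\in\Gr(d,n)$ that the paper justifies by appealing to this very theorem; your ordering (prove the formula for an arbitrary representative first, then read off basis-independence) avoids any circularity there.
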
 
It is evident that for every $k \in \Z^n$ there exists $A \in \Gr(d,n)$ such that $k \bot A$, see \cite[p. 11]{A11} and \cite[Lemma 9]{I15}. This directly gives a reconstructive inversion procedure for $R_d$. In section \ref{sec:L1}, we derive new inversion formulas which might provide computational advantage in practice (cf. \cite{IKR19} when $n = 2$ and $d=1$).

\begin{lemma}\label{thm:commutation} Let $A \in \Gr(d,n)$.
\begin{enumerate}[label=(\roman*)]
\item If $P: \mathcal{T}' \to \mathcal{T}'$ acts as a Fourier multiplier $(p_k)_{k \in \Z^n}$, then $[P,R_{d,A}] = 0$.\label{item:comprop1}
\item $R_{d,A}: L_s^p(\T^n) \to L_s^p(\T^n)$ is $1$-Lipschitz for any $p \in [1,\infty]$.\label{item:comprop2}
\end{enumerate}
\end{lemma}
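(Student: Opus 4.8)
The plan is to reduce both claims to the spectral characterization of $R_d$ given by theorem \ref{thm:ilmavirta}, namely $\widehat{R_df}(k,A) = \hat f(k)\delta_{k\perp A}$, together with the commutation lemma \ref{thm:commutation}\ref{item:comprop1} which lets me move any Fourier multiplier across $R_{d,A}$. The first part asserts that $[P,R_{d,A}]=0$ for any Fourier multiplier $P$ with symbol $(p_k)$. Working on the distributional Fourier side is the cleanest route: I would compute $\widehat{R_{d,A}Pf}(k)$ and $\widehat{PR_{d,A}f}(k)$ separately and check they agree. Using theorem \ref{thm:ilmavirta}, $\widehat{PR_{d,A}f}(k) = p_k\,\widehat{R_{d,A}f}(k) = p_k\,\hat f(k)\,\delta_{k\perp A}$, while $\widehat{R_{d,A}Pf}(k) = \widehat{Pf}(k)\,\delta_{k\perp A} = p_k\,\hat f(k)\,\delta_{k\perp A}$; these are identical for every $k\in\Z^n$, so the two distributions coincide. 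The only subtlety is that $R_{d,A}$ is defined on all of $\mathcal{T}'$ via duality, so I should verify that applying $R_{d,A}$ to the distribution $Pf$ is legitimate and that its Fourier coefficients are indeed given by theorem \ref{thm:ilmavirta}; but theorem \ref{thm:ilmavirta} is stated for arbitrary $f\in\mathcal{T}'$, so this is immediate.

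For part \ref{item:comprop2}, the strategy is to write the Bessel potential norm as an $L^p$ norm after applying the multiplier $(1-\Delta)^{s/2}$, and then invoke part \ref{item:comprop1} to commute that multiplier past $R_{d,A}$, reducing everything to the already-established $L^p$ bound of lemma \ref{lem:L1lemma}\ref{item:L1prop2}. Concretely, for $f\in L_s^p(\T^n)$ I would write
\begin{equation}
\norm{R_{d,A}f}_{L_s^p(\T^n)} = \norm{(1-\Delta)^{s/2}R_{d,A}f}_{L^p(\T^n)} = \norm{R_{d,A}(1-\Delta)^{s/2}f}_{L^p(\T^n)},
\end{equation}
where the last equality is exactly the commutation statement \ref{item:comprop1} applied to the Fourier multiplier $P = (1-\Delta)^{s/2}$ with symbol $p_k = \vev{k}^s$. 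Since $(1-\Delta)^{s/2}f \in L^p(\T^n)$ by the definition of the Bessel potential space, lemma \ref{lem:L1lemma}\ref{item:L1prop2} gives
\begin{equation}
\norm{R_{d,A}(1-\Delta)^{s/2}f}_{L^p(\T^n)} \leq \norm{(1-\Delta)^{s/2}f}_{L^p(\T^n)} = \norm{f}_{L_s^p(\T^n)},
\end{equation}
which is precisely the claimed $1$-Lipschitz (indeed norm-nonexpansive) bound. Linearity of $R_{d,A}$ then upgrades the norm bound to the Lipschitz statement for differences $R_{d,A}f - R_{d,A}g = R_{d,A}(f-g)$.

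I expect no serious obstacle here, as both parts are essentially bookkeeping on the Fourier side once theorem \ref{thm:ilmavirta} and lemma \ref{lem:L1lemma}\ref{item:L1prop2} are in hand. The one point deserving care is ensuring that $(1-\Delta)^{s/2}f$, a priori only a distribution, actually lands in $L^p(\T^n)$ so that lemma \ref{lem:L1lemma}\ref{item:L1prop2} applies — but this holds by the very definition of $f\in L_s^p(\T^n)$. A secondary check is that the commutation identity, proved at the level of Fourier coefficients, transfers to the distributional identity of functions before taking $L^p$ norms; this is justified because two distributions with equal Fourier coefficients are equal, and the objects involved are genuine $L^p$ functions. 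Thus the harder conceptual work has already been absorbed into theorem \ref{thm:ilmavirta} and the $L^1$-lemma, and the remaining argument is a short and clean reduction.
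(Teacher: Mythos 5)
Your proposal is correct and follows essentially the same route as the paper: part \ref{item:comprop1} is verified by matching Fourier coefficients via theorem \ref{thm:ilmavirta}, and part \ref{item:comprop2} is obtained by commuting $(1-\Delta)^{s/2}$ past $R_{d,A}$ and invoking the $L^p$ bound of lemma \ref{lem:L1lemma}\ref{item:L1prop2}. The only cosmetic blemish is the opening sentence, which cites the commutation statement as an ingredient in its own proof; the argument you actually give does not rely on that circularity.
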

\begin{proof} \ref{item:comprop1} This is a simple application of theorem \ref{thm:ilmavirta}. We calculate that \begin{equation}\widehat{R_d(Pf)}(k,A) = \widehat{Pf}(k)\delta_{k\bot A} = p_k\hat{f}(k)\delta_{k\bot A} = \widehat{P(R_df)}(k,A).\end{equation}

\ref{item:comprop2} Suppose that $f \in L_s^p(\T^n)$. Now $ h := (1-\Delta)^{s/2}f \in L^p(\T^n)$. Notice that $R_{d,A}h \in L^p(\T^n)$ by lemma \ref{lem:L1lemma}. We have by the property \ref{item:comprop1} that $(1-\Delta)^{s/2}R_{d,A}f = R_{d,A}h \in L^p(\T^n).$ Hence $R_{d,A}f \in L_s^p(\T^n)$. We can conclude that
\begin{equation}\norm{R_{d,A}f}_{L_s^p(\T^n)} = \norm{R_{d,A}h}_{L^p(\T^n)} \leq \norm{h}_{L^p(\T^n)} = \norm{f}_{L_s^p(\T^n)}\end{equation}
by lemma \ref{lem:L1lemma}.
\end{proof}

The next lemma generalizes \cite[Proposition 11]{IKR19} to many different directions.

\begin{lemma}\label{cor:cont} Let $p \in [1,\infty]$. \begin{enumerate}[label=(\roman*)]
\item Let $l \in [1,\infty)$. Suppose that for any $A \in \Gr(d,n)$ there exists $C_A > 0$ such that $w(k,A) = C_A$ for every $k \bot A$. Moreover, suppose that \begin{equation}C_w^l := \sum_{A\in \Gr(d,n)} C_A^l < \infty.\end{equation} Then the Radon transform $R_{d}: L_s^p(\T^n) \to L_s^{p,l}(X_{d,n};w)$ is $C_w$-Lipschitz.\label{item:contprop1}
\item Suppose that for any $A \in \Gr(d,n)$ there exists $C_A > 0$ such that $w(k,A) = C_A$ for every $k \bot A$. Moreover, suppose that \begin{equation}C_w = \sup_{A \in \Gr(d,n)} C_A < \infty.\end{equation} Then the Radon transform $R_{d}: L_s^p(\T^n) \to L_s^{p,\infty}(X_{d,n};w)$ is $C_w$-Lipschitz.\label{item:contprop2}
\item Suppose that there exists $C_w > 0$ such that \begin{equation}\sum_{A \in \Omega_k} w(k,A)^2 \leq C_w^2, \quad \Omega_k := \{\, A \in \Gr(d,n) \,;\, k\bot A\,\}\end{equation}
for any $k \in \Z^n$. Then the Radon transform $R_{d}: H^s(\T^n) \to L_s^{2,2}(X_{d,n};w)$ is $C_w$-Lipschitz.\label{hilbcase}
\end{enumerate}
\end{lemma}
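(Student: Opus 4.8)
The plan is to use linearity of $R_d$, so that each asserted $C_w$-Lipschitz bound is equivalent to the operator estimate $\norm{R_d f} \leq C_w \norm{f}$ on the relevant domain. The common starting point is theorem \ref{thm:ilmavirta}, which gives $\widehat{R_{d,A}f}(k) = \hat{f}(k)\delta_{k\bot A}$; thus the Fourier coefficients of each projection $R_{d,A}f$ are supported on the set $\{\,k \in \Z^n \,;\, k\bot A\,\}$. For parts \ref{item:contprop1} and \ref{item:contprop2} I would exploit the fact that in those hypotheses the weight $w(k,A) = C_A$ is constant precisely on $\{\,k \,;\, k\bot A\,\}$, which is exactly the Fourier support of $R_{d,A}f$. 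Consequently the multiplier $F_{w(\cdot,A)}$ acts on $R_{d,A}f$ as multiplication by the scalar $C_A$, so that $\norm{R_{d,A}f}_{L_s^p(\T^n;w(\cdot,A))} = C_A \norm{R_{d,A}f}_{L_s^p(\T^n)}$, and lemma \ref{thm:commutation}\,\ref{item:comprop2} bounds the right-hand side by $C_A\norm{f}_{L_s^p(\T^n)}$. Raising to the $l$-th power and summing over $\Gr(d,n)$ for \ref{item:contprop1}, or taking the supremum over $\Gr(d,n)$ for \ref{item:contprop2}, produces the constants $C_w = (\sum_A C_A^l)^{1/l}$ and $C_w = \sup_A C_A$ respectively, after an $l$-th root.

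In part \ref{hilbcase} the weight is no longer constant on $\{\,k\,;\,k\bot A\,\}$, so the scalar reduction is unavailable; instead I would compute the Hilbert-space norm directly via Parseval. Writing out
\[
\norm{R_d f}_{L_s^{2,2}(X_{d,n};w)}^2 = \sum_{A \in \Gr(d,n)} \sum_{k \in \Z^n} \vev{k}^{2s} w(k,A)^2 \abs{\hat{f}(k)}^2 \delta_{k\bot A},
\]
I would interchange the two nonnegative sums by Tonelli's theorem and collect the inner factor $\sum_{A \in \Omega_k} w(k,A)^2$, which the hypothesis bounds by $C_w^2$. This yields $\norm{R_d f}_{L_s^{2,2}(X_{d,n};w)}^2 \leq C_w^2 \sum_{k \in \Z^n} \vev{k}^{2s}\abs{\hat{f}(k)}^2 = C_w^2\norm{f}_{H^s(\T^n)}^2$, and a square root finishes the case.

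The genuinely distinct step is part \ref{hilbcase}: there the weight cannot be factored out as a scalar, and the crux is the Tonelli interchange that converts the per-plane data — the summability of $w(\cdot,A)^2$ across the Grassmannian that is encoded in the $L_s^{2,2}$ norm — into the per-frequency bound $\sum_{A \in \Omega_k} w(k,A)^2 \leq C_w^2$. Parts \ref{item:contprop1} and \ref{item:contprop2} are comparatively routine once the scalar-multiplier observation is in place, since they then follow immediately from the unweighted $1$-Lipschitz bound of lemma \ref{thm:commutation}\,\ref{item:comprop2} together with the elementary summation over $\Gr(d,n)$.
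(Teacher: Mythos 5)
Your proposal is correct and follows essentially the same route as the paper: for parts (i) and (ii) you use theorem \ref{thm:ilmavirta} to reduce $F_{w(\cdot,A)}R_{d,A}f$ to the scalar $C_A R_{d,A}f$ and then invoke the $1$-Lipschitz bound of lemma \ref{thm:commutation}, and for part (iii) you compute the $L_s^{2,2}$ norm via Parseval and interchange the nonnegative double sum to collect $\sum_{A\in\Omega_k}w(k,A)^2\leq C_w^2$, exactly as in the paper's proof.
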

\begin{proof} \ref{item:contprop1} We have that
\begin{equation}\label{eq:boundedness}\norm{R_{d,A}f}_{L_s^p(\T^n)} \leq \norm{f}_{L_s^p(\T^n)}\end{equation} for any $A \in \Gr(d,n)$ by lemma \ref{thm:commutation}. Theorem \ref{thm:ilmavirta} implies that
\begin{equation}\label{eq:jokueq}F_{w(\cdot,A)}R_{d,A}f(x) = \sum_{k \bot A} w(k,A)\hat{f}(k)e^{2\pi i k\cdot x}.\end{equation} This gives that $F_{w(\cdot,A)}R_{d,A}f = C_AR_{d,A}f$. Now it follows from \eqref{eq:boundedness} and the definition of $C_w^l$ that
\begin{equation}\begin{split}\norm{R_df}_{L_s^{p,l}(X_{d,n};w)}^l &= \sum_{A \in \Gr(d,n)}C_A^l \norm{R_{d,A}f}_{L_s^p(\T^n)}^l \\
&\leq C_w^l\norm{f}_{L_s^p(\T^n)}^l.\end{split}\end{equation}

\ref{item:contprop2} A calculation similar to the proof of \ref{item:contprop1} shows that
\begin{equation}\norm{R_df}_{L_s^{p,\infty}(X_{d,n};w)} \leq \norm{f}_{L_s^p(\T^n)} \sup_{A \in \Gr(d,n)} C_A.\end{equation}

\ref{hilbcase} We have that 
\begin{equation}\begin{split}\norm{R_df}_{L_s^{2,2}(X_{d,n};w)}^2 &= \sum_{A \in \Gr(d,n)}\norm{\sum_{k\bot A} w(k,A) \vev{k}^s \hat{f}(k)e^{2\pi i k \cdot x}}_{L^2(\T^n)}^2 \\
&= \sum_{A \in \Gr(d,n)} \sum_{k\bot A} w(k,A)^2 \abs{\vev{k}^s\hat{f}(k)}^2\\
&= \sum_{k \in \Z^n} \sum_{A \in \Omega_k} w(k,A)^2 \vev{k}^{2s} \abs{\hat{f}(k)}^2 \\
&\leq C_w^2 \norm{f}_{L_s^2(\T^n)}^2 \end{split}\end{equation}
where the order of summation can be interchanged by non-negativity of the terms.\end{proof}

\begin{remark} If $d = n-1$, then the only restriction on $w$ in the case of \ref{hilbcase} is $\sum_{A \in \Gr(n-1,n)} w(0,A)^2 < \infty$. This follows since each $A \in \Gr(n-1,n)$ has a unique normal direction.
\end{remark}

\subsubsection{Adjoint and normal operators}
\label{sec:adjnorm}

Next, we study the adjoint and normal operators of $R_d$ when the image side is equipped with the Hilbert space $L_s^{2,2}(X_{d,n};w)$ satisfying the assumptions \ref{hilbcase} of lemma \ref{cor:cont}. This generalizes the considerations in \cite[Section 2.4]{IKR19} into higher dimensions and for any $1 \leq d \leq n-1$.

\begin{proof}[Proof of theorem \ref{thm:adjoint}] Let $f \in H^s(\T^n)$ and $g \in L_s^{2,2}(X_{d,n};w)$. Using the definition of the inner product (\ref{eq:innerprod}), we get
\begin{equation}\begin{split}(R_df,g)_{L_s^{2,2}(X_{d,n};w)} &= \sum_{A \in \Gr(d,n)} (F_{w(\cdot,A)}R_d f,F_{w(\cdot,A)}g)_{H^s(\T^n)}\\
&= \sum_{A \in \Gr(d,n)} \sum_{k \bot A} w(k,A)^2\vev{k}^{2s} \hat{f}(k)\hat{g}(k,A)^* \\
&= \sum_{k \in \Z^n} \sum_{A \in \Omega_k} w(k,A)^2\vev{k}^{2s} \hat{f}(k)\hat{g}(k,A)^*\\
&= \sum_{k \in \Z^n} \vev{k}^{2s} \hat{f}(k) \left(\sum_{A \in \Omega_k} w(k,A)^2\hat{g}(k,A)\right)^*\\
&=: (f,R_d^*g)_{H^s(\T^n)}\end{split}\end{equation}
where we can interchange the order of the summation by the Cauchy–Schwarz inequality as it implies that the series is absolutely convergent.

We have that \begin{equation}\begin{split}
\widehat{R_d^*R_df}(k) &= \sum_{A \in \Omega_k} w(k,A)^2 \widehat{R_df}(k,A) \\
&= \sum_{A \in \Omega_k} w(k,A)^2 \hat{f}(k)\delta_{k\bot A}\\
&= \hat{f}(k)\sum_{A \in \Omega_k} w(k,A)^2\end{split}\end{equation}
by the formula for the adjoint and theorem \ref{thm:ilmavirta}.
\end{proof}

We prove corollary \ref{cor:h2results} on inversion formulas and stability estimates next.

\begin{proof}[Proof of corollary \ref{cor:h2results}] \ref{item:h1prop1} We first calculate that
\begin{equation}\label{eq:lipeka} \norm{F_{W_k^{-1}} R_d^* g}_{H^s(\T^n)}^2 = \sum_{k \in \Z^n} \vev{k}^{2s}\frac{1}{W_k^2}\abs{\sum_{A \in \Omega_k} w(k,A)^2 \hat{g}(k,A)}^2
\end{equation} for any $g \in L_s^{2,2}(X_{d,n};w)$. The triangle inequality and Hölder's inequality for the sequences $w(k,A)$ and $w(k,A)\abs{\hat{g}(k,A)}$ over $A \in \Omega_k$ gives that
\begin{equation}\label{eq:liptoka}
\abs{\sum_{A \in \Omega_k} w(k,A)^2 \hat{g}(k,A)}^2 
\leq W_k \left(\sum_{A \in \Omega_k} w(k,A)^2 \abs{\hat{g}(k,A)}^2\right).
\end{equation}
Recall that
\begin{equation}\label{eq:lipkolmas} \norm{g}_{L_s^{2,2}(X_{d,n};w)}^2 = \sum_{k \in \Z^n} \vev{k}^{2s} \sum_{A \in \Gr(d,n)} w(k,A)^2 \abs{\hat{g}(k,A)}^2
\end{equation}
after a rearrangement of the series. We can conclude from the formulas (\ref{eq:lipeka}), (\ref{eq:liptoka}) and (\ref{eq:lipkolmas}) that $\norm{F_{W_k^{-1}}R_d^*g}_{H^s(\T^n)} \leq \frac{1}{c_w}\norm{g}_{L_s^{2,2}(X_{d,n};w)}$.

\ref{item:h1prop2} This is a simple calculation using the formula for the normal operator:
\begin{equation}(R_df,R_df)_{L_s^{2,2}(X_{d,n};w)} 
=(f,F_{W_k}f)_{H^s(\T^n)} \geq \inf_{k \in \Z^n}{W_k} \norm{f}_{H^s(\T^n)}^2\end{equation} if $f \in H^s(\T^n)$.

\ref{item:Lspstab} We have by remark \ref{rm:joku2} that $\tilde{w}$ is a weight that satisfies the assumptions of theorem \ref{thm:adjoint} and $\tilde{W}_k = 1$ for any $k \in \Z^n$. Therefore, the corresponding adjoint $R_d^{*,\tilde{w}}$ is well-defined, and $R_d^{*,\tilde{w}}R_df = f$ for any $f \in \mathcal{T}'$ by theorem \ref{thm:adjoint}.
\end{proof}

\section{Inversion formulas}
\label{sec:L1}

We have already proved one new inversion formula in corollary \ref{cor:h2results} for $H^s(\T^n)$ functions. In this section, we prove three other inversion formulas. One of the formulas generalizes the inversion formula for $R_1$ on $L^1(\T^2)$ proved in \cite[Theorem 1 and Theorem 8]{IKR19}. The second inversion formula is a corollary of the first one and remains valid for any distribution. The third inversion formula takes a slightly different approach and shows that a distribution $f \in \mathcal{T}'$ is a weighted sum of the data $R_{d,A}f$ over the set $\Gr(d,n)$. These formulas might have practical value.

\begin{proposition}[The first inversion formula]\label{prop:L1rec} Let $A\in \Gr(d,n)$ and $k \in \Z^n$. Suppose that $f \in \mathcal{T}'$ and $R_{d,A}f \in L^1(\T^2)$. If $k\bot A$, then
\begin{equation}\label{eq:invformula}\hat{f}(k) = \int_{[0,1]^q}  R_{d,A}f(\phi_A(T,0))\exp(-2\pi i (k_{1_A}t_{1_A}+\cdots+k_{q_A}t_{q_A}))dT.
\end{equation}
\end{proposition}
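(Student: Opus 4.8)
The goal is to express the Fourier coefficient $\hat{f}(k)$, for $k \bot A$, as a partial Fourier transform of the Radon data $R_{d,A}f$ taken over the complementary coordinate directions $e_{1_A},\dots,e_{q_A}$. The plan is to exploit the coordinate system $\phi_A$ from \eqref{eq:coords} together with the change of variables formula \eqref{eq:intsubs} and the fact, established in lemma \ref{lem:L1lemma}\ref{item:L1prop3}, that $R_{d,A}f$ is constant along the plane directions spanned by $A$.

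First I would unwind the definition of the Fourier coefficient as a pairing: $\hat{f}(k) = (f, e^{-2\pi i k \cdot x})$, and pass to the change of coordinates $x = \phi_A(T,S)$ using \eqref{eq:intsubs}, so that $\hat{f}(k) = \int_{[0,1]^n} f(\phi_A(T,S)) e^{-2\pi i k \cdot \phi_A(T,S)}\, dS\, dT$. The key structural fact to exploit is that, since $k \bot A$, the exponential $e^{-2\pi i k \cdot \phi_A(T,S)}$ does not depend on the plane variables $S = (s_1,\dots,s_d)$: indeed $k \cdot \phi_A(T,S) = \sum_i t_i (k \cdot e_{i_A}) + \sum_j s_j (k \cdot v_j)$ and the second sum vanishes because $k \cdot v_j = 0$ for each $v_j \in A$. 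This reduces the exponential factor to $\exp(-2\pi i \sum_i t_{i_A} (k \cdot e_{i_A}))$, and writing $k_{i_A} := k \cdot e_{i_A}$ gives exactly the kernel appearing in \eqref{eq:invformula}.

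Next I would carry out the inner integration over $S \in [0,1]^d$. Because the exponential is independent of $S$, the $S$-integral acts only on $f(\phi_A(T,S))$, and by the definition \eqref{eq:aeL1def} of the Radon transform, $\int_{[0,1]^d} f(\phi_A(T,S))\, dS = R_{d,A}f(\phi_A(T,\tilde 0))$, which is precisely the statement that integrating over the plane directions produces the Radon data; this is formalized by \eqref{eq:shiftHyper}. Substituting this back yields $\hat{f}(k) = \int_{[0,1]^q} R_{d,A}f(\phi_A(T,0))\exp(-2\pi i\sum_i k_{i_A} t_{i_A})\, dT$, which is the claimed formula \eqref{eq:invformula}. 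The hypothesis $R_{d,A}f \in L^1(\T^2)$ guarantees that all the integrals are absolutely convergent and that Fubini applies, so the order of integration can be rearranged freely.

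The main obstacle, and the point requiring care, is the passage from the distributional pairing to the iterated Lebesgue integral: a priori $\hat f(k)$ is defined via the duality pairing $(\cdot,\cdot)$ on $\mathcal{T}'$, and one must justify that under the assumption $R_{d,A}f \in L^1$ the relevant integrals genuinely represent this pairing. I expect this to be handled by invoking lemma \ref{lem:L1lemma}\ref{item:L1prop1}, which identifies the distributional and integral definitions of $R_{d,A}$, so that the computation can be performed at the level of $L^1$ functions rather than abstract distributions. A secondary technical point is confirming that $\phi_A$ maps $[0,1]^n$ onto a fundamental domain covering $\T^n$ with the correct multiplicity, but this is exactly the content of \eqref{eq:intsubs} and the surrounding discussion, so it can be cited rather than reproved.
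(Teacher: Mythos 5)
Your computational skeleton (change of variables via $\phi_A$, orthogonality $k\bot A$ killing the $S$-dependence of the exponential, the $S$-integral producing the Radon data) matches the paper's, but you run it on the wrong object, and this creates a genuine gap. You begin with $\hat f(k) = \int_{[0,1]^n} f(\phi_A(T,S))e^{-2\pi i k\cdot\phi_A(T,S)}\,dS\,dT$ and then integrate $f$ itself over $S$ to produce $R_{d,A}f(\phi_A(T,\tilde 0))$. Both steps require $f$ to be a genuine integrable function, yet the proposition only assumes $f\in\mathcal{T}'$ with $R_{d,A}f\in L^1$. These hypotheses do not force $f$ to be a function: by theorem \ref{thm:ilmavirta}, $R_{d,A}f$ only sees the Fourier coefficients $\hat f(k)$ with $k\bot A$, so one may take $\hat f(k)$ to grow polynomially on $\{k \not\bot A\}$ and vanish on $\{k\bot A\}\setminus\{0\}$; then $R_{d,A}f$ is a constant (certainly in $L^1$) while $f$ is a singular distribution for which your iterated integral is meaningless. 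Your proposed patch, lemma \ref{lem:L1lemma}\ref{item:L1prop1}, does not close this: that item identifies the distributional and a.e.\ definitions of $R_{d,A}$ precisely \emph{when $f\in L^1(\T^n)$}, which is the hypothesis you are not entitled to here. As written, your argument proves corollary \ref{cor:invform} (the $f\in L^1$ case) --- it is essentially the ``direct proof'' alluded to in the remark following that corollary --- but not the proposition as stated.

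The paper avoids the issue by never representing $f$ as a function: it computes $\widehat{R_{d,A}f}(k)$, the Fourier coefficient of the function that \emph{is} assumed to lie in $L^1$, via the change of variables \eqref{eq:intsubs}; it then uses $k\bot A$ together with the translation invariance \eqref{eq:shiftHyper} from lemma \ref{lem:L1lemma}\ref{item:L1prop3} to make the integrand independent of $S$, so the $S$-integral over $[0,1]^d$ is trivial; and only at the end does it invoke theorem \ref{thm:ilmavirta} to identify $\widehat{R_{d,A}f}(k)=\hat f(k)$ for $k\bot A$. Reorganizing your computation around $R_{d,A}f$ in this way, with theorem \ref{thm:ilmavirta} supplying the link back to $\hat f(k)$, repairs the proof with no other changes.
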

\begin{proof} Fubini's theorem, theorem \ref{thm:ilmavirta} and the formula (\ref{eq:intsubs}) implies that
\begin{equation}\label{eq:invforl1X}\begin{split}&\widehat{R_{d,A}f}(k) \\
&\,\,= \int_{[0,1]^q}\int_{[0,1]^d} R_{d,A}f(\phi_A(T,S))\exp(-2\pi i k \cdot \phi_A(T,S))dSdT.\end{split}\end{equation} Since $k\bot A$, a simple calculation shows that 
\begin{equation}\label{eq:invforl1A} k \cdot \phi_A(T,S) = k_{1_A}t_{1_A}+\cdots+k_{q_A}t_{q_A},\end{equation} and lemma \ref{lem:L1lemma} implies that \begin{equation}\label{eq:invforl1B} R_{d,A}f(\phi_A(T,S)) = R_{d,A}f(\phi_A(T,0))\end{equation} for a.e. $T \in [0,1]^q$.

Hence, using the formulas (\ref{eq:invforl1A}) and (\ref{eq:invforl1B}), we may simplify the formula (\ref{eq:invforl1X}) into the form
\begin{equation}\begin{split}&\widehat{R_{d,A}f}(k) \\
&\,\,= \int_{[0,1]^q}  R_{d,A}f(\phi_A(T,0))\exp(-2\pi i (k_{1_A}t_{1_A}+\cdots+k_{q_A}t_{q_A}))dT.\end{split}\end{equation} \end{proof}
\begin{remark} The proof shows that instead of choosing $S =0$, we may choose any other values for the $S$-coordinates as well.
\end{remark}

We immediately get the following corollary from proposition \ref{prop:L1rec} and lemma \ref{lem:L1lemma}.
\begin{corollary}\label{cor:invform} Suppose that $f \in L^1(\T^n)$. Then the inversion formula (\ref{eq:invformula}) is valid.
\end{corollary}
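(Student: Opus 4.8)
The plan is to observe that corollary \ref{cor:invform} is the special case of proposition \ref{prop:L1rec} in which the integrability hypothesis is automatically satisfied. The only work is to check that every $f \in L^1(\T^n)$ meets the two standing assumptions of that proposition, namely $f \in \mathcal{T}'$ and $R_{d,A}f \in L^1(\T^n)$.

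First I would note that $L^1(\T^n) \subset \mathcal{T}'$, so $f$ is in particular a periodic distribution and the coefficient $\hat{f}(k)$ on the left-hand side of (\ref{eq:invformula}) is well defined for every $k \in \Z^n$. Next, for any fixed $A \in \Gr(d,n)$, I would apply property \ref{item:L1prop2} of lemma \ref{lem:L1lemma} with $p=1$, which states that $R_{d,A} \colon L^1(\T^n) \to L^1(\T^n)$ is $1$-Lipschitz; in particular $R_{d,A}f \in L^1(\T^n)$, so the a.e.\ pointwise formula (\ref{eq:aeL1def}) makes sense and the integrand $R_{d,A}f(\phi_A(T,0))$ appearing in (\ref{eq:invformula}) is well defined for a.e.\ $T \in [0,1]^q$.

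With both assumptions verified, I would simply invoke proposition \ref{prop:L1rec} to conclude that (\ref{eq:invformula}) holds for every $A \in \Gr(d,n)$ and every $k \in \Z^n$ with $k \bot A$. Since every $k$ is orthogonal to some $A$, as noted after theorem \ref{thm:ilmavirta}, this recovers all Fourier coefficients of $f$ from its Radon data.

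I do not expect any genuine obstacle here, as the statement is an immediate corollary whose entire content is the integrability supplied by lemma \ref{lem:L1lemma}. The single point worth double-checking is that the slice $T \mapsto R_{d,A}f(\phi_A(T,0))$ entering (\ref{eq:invformula}) is measurable and a.e.\ well defined; this was already established inside the proof of lemma \ref{lem:L1lemma}, where $R_{d,A}f$ is shown to be measurable and invariant along the plane spanned by $A$.
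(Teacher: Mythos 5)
Your proof is correct and matches the paper's: the corollary is stated there as following immediately from proposition \ref{prop:L1rec} together with lemma \ref{lem:L1lemma}, which is precisely your argument (using part \ref{item:L1prop2} with $p=1$ to get $R_{d,A}f \in L^1(\T^n)$ and then invoking the proposition). No gaps.
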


\begin{remark} One could prove corollary \ref{cor:invform} directly without using lemma \ref{lem:L1lemma} and theorem \ref{thm:ilmavirta} (or proposition \ref{prop:L1rec}). This proof is given for the geodesic X-ray transform in \cite{IKR19} and it could be adapted to this setting as well.
\end{remark}

Recall that the structure theorem of periodic distributions \cite[Theorem 2.4.5]{S13} states that for any $f \in \mathcal{T}'$ there exist $h \in C(\T^n)$ and $s \geq 0$ such that
\begin{equation}\label{eq:struct}
f=(1-\Delta)^sh.\end{equation} We get another corollary of proposition \ref{prop:L1rec} and lemma \ref{thm:commutation}.
\begin{corollary}[The second inversion formula]\label{cor:dist} Let $A\in \Gr(d,n)$ and $k \in \Z^n$. Suppose that $f \in \mathcal{T}'$ and $f=(1-\Delta)^sh$, $h \in C(\T^n)$. If $k \bot A$, then
\begin{equation} \hat{f}(k) = \vev{k}^{2s}\widehat{R_{d,A}h}(k) = \widehat{R_{d,A}f}(k)\end{equation} where $\widehat{R_{d,A}h}(k)$ can be calculated by the formula (\ref{eq:invformula}).
\end{corollary}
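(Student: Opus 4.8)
The plan is to deduce the corollary directly from the first inversion formula (Proposition \ref{prop:L1rec}) applied to the continuous representative $h$, and then to transfer the conclusion to $f$ using the fact that $(1-\Delta)^s$ acts as a Fourier multiplier and hence commutes with $R_{d,A}$. The two displayed equalities then correspond to the two routes (Fourier multiplier versus commutation) between $h$-data and $f$-data.

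First I would note that, since $h \in C(\T^n) \subset L^1(\T^n)$, lemma \ref{lem:L1lemma} guarantees $R_{d,A}h \in L^1(\T^n)$, so that proposition \ref{prop:L1rec} is applicable to $h$. Because $k \bot A$, that proposition yields
\[
\hat{h}(k) = \int_{[0,1]^q} R_{d,A}h(\phi_A(T,0))\exp(-2\pi i(k_{1_A}t_{1_A}+\cdots+k_{q_A}t_{q_A}))\,dT = \widehat{R_{d,A}h}(k),
\]
i.e. the integral (\ref{eq:invformula}) simultaneously computes $\hat{h}(k)$ and $\widehat{R_{d,A}h}(k)$. This is the only place the $L^1$ formula is used, and it is exactly why the statement says $\widehat{R_{d,A}h}(k)$ is computable by (\ref{eq:invformula}).

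Next, since $(1-\Delta)^s$ is the Fourier multiplier with symbol $\vev{k}^{2s}$, the hypothesis $f=(1-\Delta)^s h$ gives $\hat{f}(k)=\vev{k}^{2s}\hat{h}(k)$, and combining this with the previous step produces the first claimed identity $\hat{f}(k)=\vev{k}^{2s}\widehat{R_{d,A}h}(k)$. For the remaining equality I would invoke the commutation lemma \ref{thm:commutation}\ref{item:comprop1} with $P=(1-\Delta)^s$, which gives $R_{d,A}f = R_{d,A}(1-\Delta)^s h = (1-\Delta)^s R_{d,A}h$ and hence $\widehat{R_{d,A}f}(k)=\vev{k}^{2s}\widehat{R_{d,A}h}(k)=\hat{f}(k)$; alternatively, this last identity is immediate from theorem \ref{thm:ilmavirta} since $k \bot A$.

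I do not expect a genuine obstacle here: the argument is bookkeeping between the given representation, proposition \ref{prop:L1rec}, and the multiplier commutation. The only points requiring a moment of care are checking that $R_{d,A}h$ is genuinely integrable so that proposition \ref{prop:L1rec} may be applied to $h$ (which follows from the continuity of $h$), and the consistent identification of $(1-\Delta)^s$ with the symbol $\vev{k}^{2s}$. Since the structure theorem provides such a representation $f=(1-\Delta)^s h$ for \emph{every} $f \in \mathcal{T}'$, the formula then holds for all periodic distributions.
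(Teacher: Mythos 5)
Your argument is correct and is exactly the route the paper intends: the corollary is stated there without a written proof, as an immediate consequence of proposition \ref{prop:L1rec} (applied to the continuous representative $h$, which is admissible since $h\in C(\T^n)\subset L^1(\T^n)$ and hence $R_{d,A}h\in L^1(\T^n)$ by lemma \ref{lem:L1lemma}) together with the multiplier commutation of lemma \ref{thm:commutation} and theorem \ref{thm:ilmavirta}. Your write-up simply makes those steps explicit, so there is nothing to correct.
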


We now prove our third inversion formula stated in the introduction.
\begin{proof}[Proof of theorem \ref{thm:recwithoutFourier}]  Using theorem \ref{thm:ilmavirta}, we calculate that
\begin{equation}\mathcal{F}(F_{w(\cdot,A)}R_{d,A}f)(k) = w(k,A)\hat{f}(k)\delta_{k\bot A}.\end{equation}
Hence, we get
\begin{equation}\begin{split}\mathcal{F}\left(\sum_{A \in \Gr(d,n)} F_{w(\cdot,A)}R_{d,A}f\right)(k) &= \sum_{A \in \Gr(d,n)} w(k,A)\hat{f}(k)\delta_{k\bot A} \\
&=\hat{f}(k) \sum_{A \in \Omega_k} w(k,A) \\
&=\hat{f}(k)\\
\end{split}\end{equation}

Suppose now that $d = n-1$ and $\hat{f}(0) = 0$. Notice that $\abs{\Omega_k} = 1$ if $k\neq0$ and $\Omega_0 = \Gr(n-1,n)$. Hence, the formula (\ref{eq:recwithoutFourier0mean}) follows by choosing any weight $w$ such that \begin{equation}\sum_{A \in \Gr(n-1,d)} w(0,A) = 1, w(0,A) \geq 0,\end{equation} and $w(k,A) = 1$ for any $A \in \Gr(n-1,n)$ and $k\neq0$.
\end{proof}

\section{Stability estimates and regularization methods}\label{sec:stabtikh}

In this section, we look at stability estimates for functions in the Bessel potential spaces when $p \neq \infty$. We also generalize the Tikhonov regularization methods developed in \cite{IKR19}. In the Tikhonov regularization part, we restrict our study to the functions in $H^s(\T^n)$, as done in \cite{IKR19}. Our results on regularization are new for any $1 \leq d \leq n-1$ when $n \geq 3$, and the stability estimates are new in any dimension.

\subsection{Stability estimates and the Sobolev inequality}

Recall that in corollary \ref{cor:h2results} we obtained the estimate \begin{equation}\norm{f}_{H^s(\T^n)}^2 \leq \frac{1}{c_w^2}\norm{R_df}_{L_s^{2,2}(X_{d,n};w)}^2\end{equation} if the weight $w$ is such that the normal operator $R_d^*R_d$ has a uniform lower bound $\frac{1}{c_w^2}$ as a Fourier multiplier. The condition on the weight $w$ is that $c_w^2 \leq W_k = \sum_{A \in\Omega_k} w(k,A)^2 \leq C_w^2$ for some uniform $c_w, C_w > 0$. This implies stability on $L_s^p(\T^n)$ if $p \leq2$, as we will show later. We can reach stability estimates for $p > 2$ using the Sobolev inequality on $\T^n$. 

\begin{theorem}[Sobolev inequality \cite{S83}] Let $f \in \mathcal{T}'$. Suppose that $s > 0$ and $1 < q < p < \infty$ satisfy $s/n \geq q^{-1} - p^{-1}$. Then \begin{equation}\label{eq:sobineq}\norm{f}_{L^p(\T^n)} \leq C \norm{f}_{L_s^q(\T^n)}\end{equation} for some $C > 0$ that does not depend on $f$.
\end{theorem}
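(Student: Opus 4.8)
The plan is to recast the statement as the boundedness of a periodic Bessel potential operator and then to invoke a weak-type convolution (Hardy--Littlewood--Sobolev) inequality. Setting $g := (1-\Delta)^{s/2}f$, so that $\norm{f}_{L_s^q(\T^n)} = \norm{g}_{L^q(\T^n)}$ and $f = (1-\Delta)^{-s/2}g$, the claim is equivalent to the continuity of $(1-\Delta)^{-s/2} \colon L^q(\T^n) \to L^p(\T^n)$ under the stated relation between exponents. This is the form I would aim to prove.

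First I would reduce to the critical exponent relation. Put $s_0 := n(q^{-1}-p^{-1})$; since $1 < q < p < \infty$ we have $s_0 \in (0,n)$, and the hypothesis $s/n \geq q^{-1}-p^{-1}$ gives $0 < s_0 \leq s$. The operator $(1-\Delta)^{-(s-s_0)/2}$ has nonnegative order, hence an integrable periodic Bessel kernel $G_{s-s_0} \in L^1(\T^n)$, so it is bounded on $L^q(\T^n)$ for $1 < q < \infty$ by Young's inequality (alternatively by the Mihlin--Hörmander theorem applied to the symbol $\vev{k}^{-(s-s_0)}$). Thus $\norm{f}_{L_{s_0}^q(\T^n)} \lesssim \norm{f}_{L_s^q(\T^n)}$, and it suffices to treat the critical case $p^{-1} = q^{-1} - s_0/n$.

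In the critical case I would write $f = G_{s_0} * g$ as a convolution on $\T^n$, where $G_{s_0}$ is the periodic Bessel kernel, i.e. the multiplier kernel of $(1-\Delta)^{-s_0/2}$. The key estimate is $G_{s_0} \in L^{r,\infty}(\T^n)$ with $r = n/(n-s_0)$. This I would obtain by comparison with the Euclidean Bessel kernel: by Poisson summation $G_{s_0}$ is the periodization $\sum_{m \in \Z^n}\mathcal{G}_{s_0}(x+m)$ of the $\R^n$ Bessel kernel $\mathcal{G}_{s_0}$, which satisfies $\mathcal{G}_{s_0}(x) \lesssim \abs{x}^{s_0-n}$ near the origin and decays exponentially at infinity. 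The term $m=0$ contributes the local singularity $\abs{x}^{s_0-n}$, which lies in weak $L^{n/(n-s_0)}$, while the remaining terms sum to a bounded function on $\T^n$; hence $G_{s_0} \in L^{r,\infty}(\T^n)$. Since $1 + p^{-1} = q^{-1} + r^{-1}$ with $1 < q,r,p < \infty$, the weak Young (Hardy--Littlewood--Sobolev) inequality gives $\norm{f}_{L^p(\T^n)} = \norm{G_{s_0}*g}_{L^p(\T^n)} \lesssim \norm{G_{s_0}}_{L^{r,\infty}(\T^n)}\,\norm{g}_{L^q(\T^n)}$, which is the desired bound.

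The main obstacle is exactly the sharp local control of the periodic kernel, namely the weak-$L^r$ membership $G_{s_0} \in L^{n/(n-s_0),\infty}(\T^n)$; the rest of the argument is a soft reduction plus a standard interpolation inequality. An alternative that sidesteps even this is a transference argument: cut $f$ by a smooth partition of unity subordinate to a fundamental domain, transport each piece to $\R^n$, apply the Euclidean Sobolev inequality, and use that periodic and Euclidean Bessel norms are locally comparable. I would nonetheless prefer the direct kernel route, since it displays the role of the exponent relation transparently, and because the finite measure of $\T^n$ additionally lets one descend from the critical $p$ to any smaller admissible exponent via $L^{p}(\T^n) \hookrightarrow L^{p'}(\T^n)$ for $p' \leq p$ if one prefers to organize the reduction that way.
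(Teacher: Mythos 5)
Your argument is correct, but note that the paper does not actually prove this theorem: it is quoted as a known result, with the proof delegated to \cite[Corollary 1.2]{BT13} (and attributed originally to \cite{S83}). So the comparison here is between your self-contained kernel argument and a citation. Your route is the classical one: reduce to the critical exponent $s_0 = n(q^{-1}-p^{-1}) \in (0,n)$ by absorbing the excess smoothness into an $L^1$-kernel operator, realize $(1-\Delta)^{-s_0/2}$ as convolution with the periodization $G_{s_0}(x) = \sum_{m\in\Z^n}\mathcal{G}_{s_0}(x+m)$ of the Euclidean Bessel kernel, isolate the $m=0$ singularity $\abs{x}^{s_0-n} \in L^{n/(n-s_0),\infty}$, and close with the weak Young inequality; the exponent bookkeeping $1+p^{-1}=q^{-1}+r^{-1}$ checks out, and the hypotheses $1<q<p<\infty$ are exactly what the weak-type Young inequality needs to avoid its endpoint failures. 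The reference the paper leans on proves the same embedding by a more elementary Fourier-side argument (essentially Hausdorff--Young plus a dyadic decomposition of the multiplier), which avoids both the kernel asymptotics and real interpolation; your approach costs the sharp local estimate $G_{s_0}\in L^{n/(n-s_0),\infty}(\T^n)$ but in exchange makes the role of the scaling relation between $s$, $p$, $q$ completely transparent and transfers verbatim from the $\R^n$ theory. Two cosmetic points: when $s=s_0$ your preliminary reduction operator is the identity rather than a convolution with an $L^1$ kernel, so phrase that step as vacuous in the critical case; and the paper's convention $\vev{k}=(1+\abs{k}^2)^{1/2}$ differs from the symbol of $(1-\Delta)^{s/2}$ under the $e^{2\pi i k\cdot x}$ normalization by a harmless comparable factor, which does not affect any of the estimates.
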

A proof of the Sobolev inequality on $\T^n$ is given in \cite[Corollary 1.2]{BT13}.

\begin{lemma}\label{lem:sobey} Let $l \in [1,\infty]$ and $g: \Gr(d,n) \to \mathcal{T}'$.
\begin{enumerate}[label=(\roman*)]
\item \label{item:ekaEY}  If $t \in \R$, $s > 0$, and $1 < q < p < \infty$ satisfy $s/n \geq q^{-1} -p^{-1}$, then
\begin{equation}
\norm{g}_{L_{t}^{p,l}(X_{d,n};w)} \leq C\norm{g}_{L_{t+s}^{q,l}(X_{d,n};w)}\label{eq:sobinequalityData}
\end{equation}
for some $C > 0$ that does not depend on $g$.

\item \label{item:tokaEY} If $1\leq p < q \leq \infty$, then for any $s \in \R$ holds
\begin{equation}
\norm{g}_{L_s^{p,l}(X_{d,n};w)} \leq \norm{g}_{L_s^{q,l}(X_{d,n};w)}. \label{eq:perusineq}
\end{equation}
\end{enumerate}
\end{lemma}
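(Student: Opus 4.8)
The plan is to reduce both inequalities to scalar statements on $\T^n$ applied separately on each fiber $A \in \Gr(d,n)$, and then to reassemble the estimates using the $\ell^l$ structure of the data-space norm. In both parts the weighted fiber norm $\norm{g(\cdot,A)}_{L_s^p(\T^n;w(\cdot,A))} = \norm{F_{w(\cdot,A)}g(\cdot,A)}_{L_s^p(\T^n)}$ is what we work with, so everything comes down to a one-variable inequality for the distribution $h := F_{w(\cdot,A)}g(\cdot,A) \in \mathcal{T}'$.

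For part \ref{item:ekaEY} I would first upgrade the Sobolev inequality \eqref{eq:sobineq} from the base scale to scale $t$. Since $(1-\Delta)^{t/2}$ commutes with the Bessel potentials and $\norm{(1-\Delta)^{t/2}h}_{L_s^q(\T^n)} = \norm{h}_{L_{t+s}^q(\T^n)}$, applying \eqref{eq:sobineq} to $(1-\Delta)^{t/2}h$ yields $\norm{h}_{L_t^p(\T^n)} \leq C\norm{h}_{L_{t+s}^q(\T^n)}$ with the very same constant $C$. The point to emphasize is that this $C$ depends only on $n,s,p,q$ and not on $h$, hence not on $A$. Taking $h = F_{w(\cdot,A)}g(\cdot,A)$, this reads $\norm{g(\cdot,A)}_{L_t^p(\T^n;w(\cdot,A))} \leq C\norm{g(\cdot,A)}_{L_{t+s}^q(\T^n;w(\cdot,A))}$ for every fixed $A$. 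Raising to the $l$-th power and summing over $\Gr(d,n)$ (or taking the supremum when $l=\infty$) lets the constant $C$ factor out, giving \eqref{eq:sobinequalityData}.

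For part \ref{item:tokaEY} the underlying fact is the embedding $L^q(\T^n) \hookrightarrow L^p(\T^n)$ for $p \leq q$ on the unit-measure torus, which follows from Hölder's (equivalently Jensen's) inequality with constant $1$. Applying this to $(1-\Delta)^{s/2}F_{w(\cdot,A)}g(\cdot,A)$ gives the fiber-wise bound $\norm{g(\cdot,A)}_{L_s^p(\T^n;w(\cdot,A))} \leq \norm{g(\cdot,A)}_{L_s^q(\T^n;w(\cdot,A))}$, and summing or taking suprema over $A$ finishes the proof. I do not anticipate a genuine obstacle: the argument is fiber-wise plus monotonicity of $\ell^l$ sums. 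The only points requiring care are that the Sobolev constant in part \ref{item:ekaEY} is genuinely uniform over the (infinite) index set $\Gr(d,n)$ — which holds precisely because \eqref{eq:sobineq} is stated with a constant independent of the function — and the routine bookkeeping for the endpoint $l=\infty$, where every sum is replaced by a supremum throughout.
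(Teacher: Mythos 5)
Your proof is correct and follows essentially the same route as the paper: apply the Sobolev inequality (respectively the $L^q\hookrightarrow L^p$ embedding on the unit-measure torus) fiber-wise with a constant uniform in $A$, shift the Sobolev scale by commuting with $(1-\Delta)^{t/2}$, and then sum or take suprema over $\Gr(d,n)$. The only cosmetic difference is that the paper performs the shift by $(1-\Delta)^{z/2}$ after assembling the $t=0$ data-space inequality, whereas you shift on each fiber first; the two orderings are interchangeable.
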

\begin{proof} \ref{item:ekaEY} We have
\begin{equation} \norm{g(\cdot,A)}_{L^p(\T^n;w(\cdot,A))} \leq C\norm{g(\cdot,A)}_{L_s^q(\T^n;w(\cdot,A))}\label{eq:jokuosa}
\end{equation}
for any $A \in \Gr(d,n)$ by the Sobolev inequality where $C > 0$ does not depend on $f$, $A$ and $w$. Now (\ref{eq:sobinequalityData}) with $t=0$ follows from the definition of the norms $\norm{\cdot}_{L_s^{q,l}(X_{d,n};w)}$ and the inequality (\ref{eq:jokuosa}).

Fix any $z \in \R$. Define then the function $\tilde{g}: \Gr(d,n) \to \mathcal{T}'$ by the formula $\tilde{g}(\cdot,A) = (1-\Delta)^{z/2}g(\cdot,A)$. Now (\ref{eq:sobinequalityData}) with $t=0$ implies
\begin{equation}
\norm{g}_{L_{z}^{p,l}(X_{d,n};w)} = \norm{\tilde{g}}_{L_{0}^{p,l}(X_{d,n};w)} \leq C\norm{\tilde{g}}_{L_{s}^{q,l}(X_{d,n};w)} = C\norm{g}_{L_{z+s}^{q,l}(X_{d,n};w)}.
\end{equation}

\ref{item:tokaEY} The inequality (\ref{eq:perusineq}) can be proved similarly. Now the Sobolev inequality is replaced by the inequality $\norm{f}_{L_s^p(\T^n)} \leq \norm{f}_{L_s^q(\T^n)}$, which holds since $m(\T^n) = 1$ and $p \leq q$.
\end{proof}

Theorem \ref{thm:adjoint} and lemma \ref{lem:sobey} imply the following, slightly more general, shifted stability estimates.

\begin{proposition}[Shifted stability estimates]\label{prop:stabilitymain} Let $w$ be a weight such that $c_w^2 \leq W_k \leq C_w^2$ for some uniform constants $c_w, C_w > 0$. Let $f \in \mathcal{T}'$, $s \in \R$, and $s(p,n) := n\abs{\frac{p-2}{2p}}$. \begin{enumerate}[label=(\roman*)]

\item\label{stabL2} If $1 < p \leq 2$, then \begin{equation}\label{eq:stabthm} \norm{f}_{L_s^p(\T^n)} \leq C_1\norm{R_df}_{L_s^{2,2}(X_{d,n};w)} \leq C_2\norm{R_df}_{L_{s+s(p,n)}^{p,2}(X_{d,n};w)},\end{equation} where $C_1,C_2 > 0$ do not depend on $f$. If $p = 1$, then the first inequality of (\ref{eq:stabthm}) holds.

\item\label{stabShifted} If $2 \leq p < \infty$, then 
\begin{equation}\norm{f}_{L_s^p(\T^n)} \leq C_1\norm{R_df}_{L_{s+s(p,n)}^{2,2}(X_{d,n};w)} \leq C_2\norm{R_df}_{L_{s+s(p,n)}^{p,2}(X_{d,n};w)},\end{equation}
where $C_1,C_2 > 0$ do not depend on $f$.
\end{enumerate}
\end{proposition}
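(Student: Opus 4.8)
The plan is to reduce everything to the single Hilbert-space stability estimate $\norm{f}_{H^\sigma(\T^n)} \leq c_w^{-1}\norm{R_df}_{L_\sigma^{2,2}(X_{d,n};w)}$ from corollary \ref{cor:h2results}\ref{item:h1prop2}, and then to trade integrability against smoothness using the Sobolev inequality \eqref{eq:sobineq} on the domain $\T^n$ and lemma \ref{lem:sobey} on the data side $X_{d,n}$. The key numerical observation, which I would record at the outset, is that $s(p,n)=n\abs{\frac{p-2}{2p}}$ is exactly the borderline Sobolev exponent: for $p \geq 2$ one has $s(p,n)/n = \frac{1}{2}-\frac{1}{p}$, and for $p \leq 2$ one has $s(p,n)/n = \frac{1}{p}-\frac{1}{2}$, so the critical condition in the relevant Sobolev estimate is met with equality in each regime. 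Both cases are trivial at $p = 2$ since then $s(p,n)=0$, so I may assume $p \neq 2$.

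For case \ref{stabL2} with $1 < p \leq 2$, the first inequality follows by combining the elementary embedding $\norm{f}_{L_s^p(\T^n)} \leq \norm{f}_{L_s^2(\T^n)} = \norm{f}_{H^s(\T^n)}$ (valid because $m(\T^n)=1$ and $p \leq 2$, exactly as in the proof of lemma \ref{lem:sobey}\ref{item:tokaEY}) with corollary \ref{cor:h2results}\ref{item:h1prop2} at the level $s$, yielding $C_1 = c_w^{-1}$; this argument uses only the elementary inclusion and so persists at $p=1$. For the second inequality I apply lemma \ref{lem:sobey}\ref{item:ekaEY} to $g = R_df$ with the lemma's pair of integrability exponents taken to be $(2,p)$ (higher $2$, lower $p$) and smoothness shift $s(p,n)$: the required hypothesis $s(p,n)/n \geq p^{-1}-2^{-1}$ holds by the borderline identity, giving $\norm{R_df}_{L_s^{2,2}(X_{d,n};w)} \leq C_2\norm{R_df}_{L_{s+s(p,n)}^{p,2}(X_{d,n};w)}$. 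Since the lemma requires the lower exponent to exceed $1$, this second step is the reason only the first inequality survives at $p=1$.

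For case \ref{stabShifted} with $2 \leq p < \infty$, I instead spend the extra smoothness on the \emph{domain} first: applying the Sobolev inequality \eqref{eq:sobineq} to $(1-\Delta)^{s/2}f$ with $q=2$ and the borderline exponent gives $\norm{f}_{L_s^p(\T^n)} \leq C\norm{f}_{H^{s+s(p,n)}(\T^n)}$, after which corollary \ref{cor:h2results}\ref{item:h1prop2} at the shifted level $s+s(p,n)$ produces the first inequality with $C_1 = Cc_w^{-1}$. The second inequality is then the trivial data-side embedding of lemma \ref{lem:sobey}\ref{item:tokaEY}, namely $\norm{R_df}_{L_{s+s(p,n)}^{2,2}(X_{d,n};w)} \leq \norm{R_df}_{L_{s+s(p,n)}^{p,2}(X_{d,n};w)}$, valid because $2 \leq p$.

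The entire content sits in corollary \ref{cor:h2results} and the Sobolev inequality; the only genuine care needed — and the spot where an index error is easy to make — is matching the direction of the integrability exchange to the correct Sobolev estimate. Concretely, when the exponent to be bounded is $\leq 2$ the exchange happens on the data side and costs smoothness there (case \ref{stabL2}), whereas when it is $\geq 2$ the exchange happens on the domain and the smoothness is spent before invoking the Hilbert-space bound (case \ref{stabShifted}). Verifying that $s(p,n)$ saturates the Sobolev threshold in both configurations is the main, if routine, bookkeeping obstacle.
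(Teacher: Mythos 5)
Your proposal is correct and follows essentially the same route as the paper's own proof: the elementary embedding $L_s^2 \hookrightarrow L_s^p$ plus corollary \ref{cor:h2results} for the first inequality in case \ref{stabL2}, lemma \ref{lem:sobey}\ref{item:ekaEY} on the data side for the second, and in case \ref{stabShifted} the Sobolev inequality with $q=2$ applied to $(1-\Delta)^{s/2}f$ before invoking the Hilbert-space bound at the shifted level, followed by lemma \ref{lem:sobey}\ref{item:tokaEY}. Your explicit remarks that $s(p,n)$ saturates the Sobolev threshold and that the $q>1$ hypothesis of lemma \ref{lem:sobey}\ref{item:ekaEY} is what excludes the second inequality at $p=1$ are accurate and match the paper's reasoning.
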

\begin{proof} \ref{stabL2} Suppose that $f \in \mathcal{T}'$ and $1 \leq p \leq 2$. Let $h = (1-\Delta)^{s/2}f$. We have that $\norm{h}_{L^p(\T^n)} \leq \norm{h}_{L^2(\T^n)}$ since $p \leq 2$ and $m(\T^n) = 1$. This implies that $\norm{f}_{L_s^p(\T^n)} \leq \norm{f}_{L_s^2(\T^n)}$. Now the first inequality follows from corollary \ref{cor:h2results}. 

Suppose additionally that $1 < p < 2$. Choose $s^p = n\frac{2-p}{2p} >0$ in the part \ref{item:ekaEY} of lemma \ref{lem:sobey}. Now it holds that 
\begin{equation}\norm{R_df}_{L_s^{2,2}(X_{d,n};w)} \leq  \norm{R_df}_{L_{s+s^p}^{p,2}(X_{d,n};w)}
\end{equation}
for any $s \in \R$.

\ref{stabShifted} Suppose that $f \in \mathcal{T}'$ and $p > 2$. Choose in the Sobolev inequality (\ref{eq:sobineq}) that $q = 2$. Now we can calculate that the Sobolev inequality is valid if $s \geq n\frac{p-2}{2p}$. Let us define that $s_p = n\frac{p-2}{2p} > 0$. Hence, $\norm{f}_{L^p(\T^n)} \leq C \norm{f}_{H^{s_p}(\T^n)}$.

Let now $s \in \R$ and $f \in L_s^p(\T^n)$. We then have that
\begin{equation}\begin{split}\norm{f}_{L_s^p(\T^n)}&=\norm{(1-\Delta)^{s/2}f}_{L^p(\T^n)} \\
&\leq C\norm{(1-\Delta)^{s/2}f}_{H^{s_p}(\T^n)} = C\norm{f}_{H^{s+s_p}(\T^n)}.\end{split}\end{equation} Now the first inequality follows from the part \ref{stabL2} of the theorem. The second inequality follows from the part \ref{item:tokaEY} of lemma \ref{lem:sobey} since $p > 2$. 
\end{proof}

\begin{remark} For any $f \in \mathcal{T}'$ there exists $s \geq 0$ such that $f \in L_{-s}^p(\T^n)$ for any $p \in [1,\infty]$ by the structure theorem of periodic distributions.
\end{remark}

\subsection{Tikhonov minimization problem}

We will show that $P_{w,s-r}^\alpha R_d^*g$ is the unique minimizer of (\ref{eq:tikh}) when $l = 2$. We first analyze the regularity properties of $P_{w,z}^\alpha$ and $P_{w,s-r}^\alpha R_d^*$. Then we understand which space the regularized reconstruction $P_{w,s-r}^\alpha R_d^*g$ lives in when $g \in L_r^{2,2}(X_{d,n};w)$. First of all, $R_d^*: L_r^{2,2}(X_{d,n};w) \to H^r(\T^n)$. On the other hand, $P_{w,z}^\alpha: H^r(\T^n) \to H^{r+2z}(\T^n)$ for any $r, z \in \R$ since $W_k$ is uniformly bounded from below. We conclude that $P_{w,s-r}^\alpha R_d^*: L_r^{2,2}(X_{d,n};w) \to H^{2s-r}(\T^n)$.

We are not ready to prove theorem \ref{thm:tikhmin}. The proof uses the same ideas as the proof of \cite[Theorem 2]{IKR19}. The proof presented here also explains some missing details about the splitting of the minimization problem into the real and imaginary parts in (\ref{eq:tikheqZ}), (\ref{eq:realpart}) and (\ref{eq:imagpart}). This is one of the crucial parts of the proof of \cite[Theorem 2]{IKR19} though it is not mentioned at all in \cite{IKR19}.

\begin{proof}[Proof of theorem \ref{thm:tikhmin}]
We have that
\begin{equation}\label{eq:tikheqB}\begin{split}&\norm{R_df -g}_{L_r^{2,2}(X_{d,n};w)}^2 \\
&\,\,= \sum_{A \in \Gr(d,n)} \sum_{k \bot A} \vev{k}^{2r}w(k,A)^2\abs{\hat{f}(k) - \hat{g}(k,A)}^2 \\
&\,\,+ \sum_{A \in \Gr(d,n)}\sum_{k \not\bot A} \vev{k}^{2r}w(k,A)^2\abs{\hat{g}(k,A)}^2.\end{split}\end{equation} Since the second term of (\ref{eq:tikheqB}) is independent of $f$, it can be neglected in the minimization problem (\ref{eq:tikh}). On the other hand,
\begin{equation}\begin{split}&\sum_{A \in \Gr(d,n)} \sum_{k \bot A} \vev{k}^{2r}w(k,A)^2 \abs{\hat{f}(k) - \hat{g}(k,A)}^2 \\
&\,\,= \sum_{k \in \Z^n} \vev{k}^{2r}\sum_{A \in \Omega_k} w(k,A)^2\abs{ \hat{f}(k) - \hat{g}(k,A)}^2
.\end{split}\end{equation}

We next expand the term \begin{equation}\alpha \norm{f}_{H^s(\T^n)}^2
= \alpha \sum_{k \in \Z^n} \vev{k}^{2s} \abs{\hat{f}(k)}^2.\end{equation} We can conclude that a solution to the minimization problem (\ref{eq:tikh}) is a minimizer of
\begin{equation}\label{eq:tikheqA}\sum_{k \in \Z^n} \vev{k}^{2r} \left(\alpha\vev{k}^{2s-2r}\abs{\hat{f}(k)}^2 + \sum_{A \in \Omega_k} w(k,A)^2\abs{\hat{f}(k)-\hat{g}(k,A)}^2\right).\end{equation} Hence, a minimizer of (\ref{eq:tikheqA}) must minimize \begin{equation}H_k(f) :=\label{eq:tikheqZ}\alpha\vev{k}^{2s-2r}\abs{\hat{f}(k)}^2 + \sum_{A \in \Omega_k} w(k,A)^2\abs{\hat{f}(k)-\hat{g}(k,A)}^2\end{equation} for each $k \in \Z^n$.

To proceed, we need to minimize the real part and the imaginary part of (\ref{eq:tikheqZ}) separately. Let us write the real and imaginary parts of the involved terms simply as $f_r(k) := \Re(\hat{f}(k))$, $f_i(k) := \Im(\hat{f}(k))$, $g_r(k,A) := \Re(\hat{g}(k,A))$ and $g_i(k,A) := \Im(\hat{g}(k,A))$ to keep our notation shorter. Now, we define the operators
\begin{equation}\label{eq:realpart}R_k(f) := \alpha \vev{k}^{2s-2r} f_r(k)^2 + \sum_{A \in \Omega_k} w(k,A)^2(f_r(k)-g_r(k,A))^2\end{equation} and \begin{equation}\label{eq:imagpart}I_k(f) := \alpha \vev{k}^{2s-2r} f_i(k)^2 + \sum_{A \in \Omega_k} w(k,A)^2(f_i(k)-g_i(k,A))^2.\end{equation} These functions have the property that $R_k(f) + I_k(f) = H_k(f)$. Moreover, if $H_k$ is minimized, then $R_k$ and $I_k$ are minimized, and vice versa.

We show how the minimization is done for the real part. As the minimization for the imaginary part is similar, we do not repeat the calculations twice. We expand the second term of (\ref{eq:realpart}), and get
\begin{equation}\label{eq:tikheqY}\begin{split} &\sum_{A \in \Omega_k} w(k,A)^2(f_r(k)-g_r(k,A))^2 \\
&\,\,= W_k f_r(k)^2 -2f_r(k)\sum_{A \in \Omega_k} w(k,A)^2 g_r(k,A) + \sum_{A \in \Omega_k}w(k,A)^2 g_r(k,A)^2.\end{split}\end{equation} The last term of (\ref{eq:tikheqY}) does not depend on $f$, so it can be neglected in the minimization. Thus, we have arrived to the minimization problem 
\begin{equation}\label{eq:tikheqX}-2f_r(k)\sum_{A \in \Omega_k} w(k,A)^2 g_r(k,A) + (W_k+\alpha\vev{k}^{2s-2r})f_r(k)^2.\end{equation}

Simple calculus shows that the minimizer of (\ref{eq:tikheqX}) is
\begin{equation}f_r(k) = \frac{\sum_{A \in \Omega_k} w(k,A)^2 g_r(k,A)}{W_k+\alpha\vev{k}^{2s-2r}} = \Re(\mathcal{F}(P_{w,s-r}^\alpha R_d^*g)(k)).\end{equation} We can similarly calculate that the unique minimizer of the minimization problem associated to the imaginary part (\ref{eq:imagpart}) is $f_i(k) = \Im(\mathcal{F}(P_{w,s-r}^\alpha R_d^*g)(k))$. This shows that the unique minimizer of (\ref{eq:tikheqZ}) satisfies $\hat{f}(k) = \mathcal{F}(P_{w,s-r}^\alpha R_d^*g)(k)$. 

Hence, the unique minimizer of (\ref{eq:tikh}) is $f = P_{w,s-r}^\alpha R_d^*g$. The claimed regularity of $f$ follows from the discussion preceding the proof.
\end{proof}

\begin{remark} If $l \neq 2$, the analysis of the Tikhonov minimization problem becomes more difficult but it might still be possible to adapt the method also in that case (when $p = 2$).
\end{remark}

\subsection{Regularization strategies}

Let~$X$ and~$Y$ be subsets of Banach spaces and $F: X \to Y$ a continuous mapping. A family of continuous maps $\mathcal{R}_\alpha: Y \to X$ with $\alpha \in (0,\alpha_0]$, $\alpha_0 > 0$, is called a \textit{regularization strategy} if $\lim_{\alpha \to 0} \mathcal{R}_\alpha (F(x)) = x$ for any $x \in X$. A choice of regularization parameter $\alpha(\epsilon)$ with $\lim_{\epsilon\to0}\alpha(\epsilon) = 0$ is called \textit{admissible} if 
\begin{equation}
\lim_{\epsilon \to 0}
\sup_{y \in Y} \left\{ \norm{\mathcal{R}_{\alpha(\epsilon)}y -x}_X \,;\, \norm{y-F(x)}_Y \leq \epsilon\,\right\}
= 0
\end{equation}
holds for any $x \in X$ \cite{EHN96,K11}. 

We will show that the solution found in theorem \ref{thm:tikhmin} to the Tikhonov minimization problem (\ref{eq:tikh}) is an admissible regularization strategy with a quantitative stability estimate. Our proof follows that of \cite[Theorem 3]{IKR19}.

\begin{proof}[Proof of theorem \ref{thm:regstrat}] Let $\alpha > 0$. Theorem \ref{thm:adjoint} implies that
\begin{equation}\label{eq:regeq1}P_{w,s}^{\alpha} R_d^*(R_df+g)-f = (P_{w,s}^{\alpha}F_{W_k} -\text{Id})f + P_{w,s}^{\alpha} R_d^*g.\end{equation} To estimate the first term on the right hand side of (\ref{eq:regeq1}), we calculate that 
\begin{equation}\label{eq:fourmult}P_{w,s}^{\alpha}F_{W_k} -\text{Id} = -\frac{\alpha W_k^{-1}\vev{k}^{2s}}{1+\alpha W_k^{-1}\vev{k}^{2s}}\end{equation} 
as a Fourier multiplier. This shows that $\norm{P_{w,s}^{\alpha}F_{W_k} -\text{Id}}_{H^r(\T^n)\to H^r(\T^n)} = 1$ as $W_k$ is bounded from below and above. It follows from the dominated convergence theorem that $\norm{(P_{w,s}^{\alpha}F_{W_k} -\text{Id})f}_r^2 \to 0$ as $\alpha \to 0$ if $f \in H^r(\T^n)$.

Suppose that $\norm{g}_{L_t^{2,2}(X_{d,n};w)} \leq \epsilon$. We have that $\norm{R_d^*} = \norm{R_d} = C_w$ by lemma \ref{cor:cont}. Hence $\norm{R_d^*g}_{H^t(\T^n)}^2 \leq C_w^2\epsilon^2$. This implies that
\begin{equation}\begin{split}\norm{P_{w,s}^{\alpha} R_d^*g}_{H^r(\T^n)}^2 &\leq C_w^2\epsilon^2\sup_{k \in \Z^n} \left(\frac{W_k^{-1}}{1+\alpha W_k^{-1}\vev{k}^{2s}}\right)^{2}\vev{k}^{2r-2t} \\
&\leq C_w^2\epsilon^2c_w^{-4}\sup_{k \in \Z^n} \left(\frac{1}{1+\alpha C_w^{-2}\vev{k}^{2s}}\right)^{2}\vev{k}^{2r-2t} \\
&\leq C_w^6c_w^{-4}\alpha^{-2}\epsilon^2\end{split}\end{equation} where the last inequality follows using $-4s+2r-2t \leq 0$. We can conclude that
\begin{equation}\label{eq:scndterm}\norm{P_{w,s}^{\alpha} R_d^*g}_{H^r(\T^n)} \leq C_w^3c_w^{-2}\frac{\epsilon}{\alpha}.\end{equation} This shows that choosing $\alpha = \sqrt{\epsilon}$ gives a regularization strategy.

Suppose now that $\delta > 0$. The proof of the estimate (\ref{eq:quantitative}) is similar to that of \cite{IKR19}. Using the formula (\ref{eq:fourmult}), we get that
\begin{equation}\norm{P_{w,s}^{\alpha}F_{W_k} -\text{Id}}_{H^{r+\delta}(\T^n)\to H^r(\T^n)} = \sup_{k \in \Z^n} \frac{\alpha W_k^{-1}\vev{k}^{2s-\delta}}{1+\alpha W_k^{-1}\vev{k}^{2s}}.\end{equation} We can estimate the norm by defining the functions
\begin{equation}F_k(x) := \frac{\alpha W_k^{-1} x^{2s-\delta}}{1+\alpha W_k^{-1}x^{2s}}.\end{equation} The formula \cite[Eq. (38)]{IKR19} implies that the maximum value of $F_k$ is $(W_k^{-1}\alpha)^{\delta/2s}C(\delta/2s)$ if $\alpha \leq W_k(2s/\delta -1)$. We see that $\alpha \leq W_k(2s/\delta -1)$ holds as we assumed that $\alpha \leq c_w^2(2s/\delta -1)$.

We obtain that
\begin{equation}\begin{split}&\norm{(P_{w,s}^\alpha F_{W_k} -\text{Id})}_{H^{r+\delta}(\T^n)\to H^r(\T^n)} \\
&\,\,\leq \sup_{k \in \Z^n, x \in \R} F_k(x) \leq (c_w^{-2}\alpha)^{\delta/2s}C(\delta/2s).\end{split}\end{equation}
Hence
\begin{equation}\label{eq:finall}\norm{(P_{w,s}^\alpha F_{W_k} -\text{Id})f}_{H^r(\T^n)} \leq (c_w^{-2}\alpha)^{\delta/2s}C(\delta/2s)\norm{f}_{H^{r+\delta}(\T^n)}.\end{equation} Now the formulas (\ref{eq:scndterm}) and (\ref{eq:finall}) imply the quantitative estimate (\ref{eq:quantitative}).
\end{proof}
\bibliographystyle{abbrv}
\bibliography{19149FIN}

\begin{thebibliography}{10}

\bibitem{A11}
A.~Abouelaz.
\newblock The {$d$}-plane {R}adon transform on the torus {$\Bbb T^n$}.
\newblock {\em Fract. Calc. Appl. Anal.}, 14(2):233--246, 2011.

\bibitem{AR11}
A.~Abouelaz and F.~Rouvi\`ere.
\newblock {R}adon transform on the torus.
\newblock {\em Mediterr. J. Math.}, 8(4):463--471, 2011.

\bibitem{BT13}
A.~B\'{e}nyi and T.~Oh.
\newblock The {S}obolev inequality on the torus revisited.
\newblock {\em Publ. Math. Debrecen}, 83(3):359--374, 2013.

\bibitem{EHN96}
H.~W. Engl, M.~Hanke, and A.~Neubauer.
\newblock {\em Regularization of Inverse Problems}, volume 375 of {\em
  Mathematics and its Applications}.
\newblock Kluwer Academic Publishers Group, Dordrecht, 1996.

\bibitem{H99}
S.~Helgason.
\newblock {\em The {R}adon Transform}, volume~5 of {\em Progress in
  Mathematics}.
\newblock Birkh\"{a}user Boston, Inc., Boston, MA, second edition, 1999.

\bibitem{Hel13}
S.~Helgason.
\newblock Some personal remarks on the {R}adon transform.
\newblock In {\em Geometric Analysis and Integral Geometry}, volume 598 of {\em
  Contemp. Math.}, pages 3--19. Amer. Math. Soc., Providence, RI, 2013.

\bibitem{I15}
J.~Ilmavirta.
\newblock On {R}adon transforms on tori.
\newblock {\em J. Fourier Anal. Appl.}, 21(2):370--382, 2015.

\bibitem{I16}
J.~Ilmavirta.
\newblock On {R}adon transforms on compact {L}ie groups.
\newblock {\em Proc. Amer. Math. Soc.}, 144(2):681--691, 2016.

\bibitem{I18}
J.~Ilmavirta.
\newblock X-ray transforms in pseudo-{R}iemannian geometry.
\newblock {\em J. Geom. Anal.}, 28(1):606--626, 2018.

\bibitem{IKR19}
J.~{Ilmavirta}, O.~{Koskela}, and J.~{Railo}.
\newblock {Torus computed tomography}.
\newblock {\em SIAM J. Appl. Math. (to appear)}, Jun 2019.

\bibitem{IU18}
J.~Ilmavirta and G.~Uhlmann.
\newblock Tensor tomography in periodic slabs.
\newblock {\em J. Funct. Anal.}, 275(2):288--299, 2018.

\bibitem{K11}
A.~Kirsch.
\newblock {\em An Introduction to the Mathematical Theory of Inverse Problems},
  volume 120 of {\em Applied Mathematical Sciences}.
\newblock Springer, New York, second edition, 2011.

\bibitem{RK19}
O.~Koskela and J.~Railo.
\newblock {Matlab implementation of Torus CT}, June 2019.
\newblock DOI: 10.5281/zenodo.3243363.

\bibitem{KQ15}
V.~P. Krishnan and E.~T. Quinto.
\newblock Microlocal analysis in tomography.
\newblock In {\em Handbook of Mathematical Methods in Imaging. {V}ol. 1, 2, 3},
  pages 847--902. Springer, New York, 2015.

\bibitem{Q06}
E.~T. Quinto.
\newblock An introduction to {X}-ray tomography and {R}adon transforms.
\newblock In {\em The {R}adon Transform, Inverse Problems, and Tomography},
  volume~63 of {\em Proc. Sympos. Appl. Math.}, pages 1--23. Amer. Math. Soc.,
  Providence, RI, 2006.

\bibitem{S13}
M.~Salo.
\newblock Fourier analysis and distribution theory, 2013.
\newblock Lecture notes, Fall 2013.

\bibitem{S64}
S.~Schanuel.
\newblock On heights in number fields.
\newblock {\em Bull. Amer. Math. Soc.}, 70:262--263, 1964.

\bibitem{ST87}
H.-J. Schmeisser and H.~Triebel.
\newblock {\em Topics in {F}ourier analysis and Function Spaces}.
\newblock A Wiley-Interscience Publication. John Wiley \& Sons, Ltd.,
  Chichester, 1987.

\bibitem{S82}
R.~S. Strichartz.
\newblock {R}adon inversion---variations on a theme.
\newblock {\em Amer. Math. Monthly}, 89(6):377--384, 420--423, 1982.

\bibitem{S83}
R.~S. Strichartz.
\newblock Improved {S}obolev inequalities.
\newblock {\em Trans. Amer. Math. Soc.}, 279(1):397--409, 1983.

\bibitem{F12}
F.~Weisz.
\newblock Summability of multi-dimensional trigonometric {F}ourier series.
\newblock {\em Surv. Approx. Theory}, 7:1--179, 2012.

\end{thebibliography}
\end{document}